\documentclass[reqno]{amsart}
\usepackage{mathptmx}
\usepackage{pxfonts}
\usepackage{amssymb}
\usepackage{amsfonts}
\usepackage{amsmath}
\usepackage{graphicx}
\usepackage{shadow}
\usepackage{color,enumerate}

\usepackage[all]{xy}
\usepackage[pagebackref]{hyperref}
\newtheorem{thm}{Theorem}[section]
\newtheorem{corollary}[thm]{Corollary}
\newtheorem{lemma}[thm]{Lemma}
\newtheorem{proposition}[thm]{Proposition}

\theoremstyle{definition}
\newtheorem{definition}[thm]{Definition}
\newtheorem{example}[thm]{Example}
\numberwithin{equation}{thm}

\theoremstyle{remark}         
\newtheorem{remark}[thm]{Remark}

\DeclareMathOperator{\Ker}{Ker}
\DeclareMathOperator{\Ima}{Im}

\DeclareMathOperator{\rank}{rank}

\DeclareMathOperator{\edim}{edim}
\DeclareMathOperator{\cdim}{codim}

\DeclareMathOperator{\rd}{rd}

\DeclareMathOperator{\lr}{\longrightarrow}

\begin{document}

\bibliographystyle{amsplain}

\title[Basically regular local homomorphisms]{Basically regular local homomorphisms}

\author[Samir Bouchiba]{Samir Bouchiba}
\address{Samir Bouchiba, Department of Mathematics, University of Meknes, Meknes 50000, Morocco}
\email{s.bouchiba@fs-umi.ac.ma}

\author[Salah Kabbaj]{Salah Kabbaj}
\address{Salah Kabbaj, Department of Mathematics and Statistics, KFUPM, Dhahran 31261, KSA}
\email{kabbaj@kfupm.edu.sa}

\author{Keri Sather-Wagstaff}
\address{Keri Sather-Wagstaff, School of Mathematical and Statistical Sciences,
Clemson University,
O-110 Martin Hall, Box 340975, Clemson, S.C. 29634
USA}
\email{ssather@clemson.edu}
\urladdr{https://ssather.people.clemson.edu/}

\date{\today}

\subjclass[2010]{13H05, 13B10}

\keywords{Regular and weakly regular local homomorphisms, regular local rings}

\dedicatory{Dedicated to the memory of Nick Baeth}

\begin{abstract}
We investigate the transfer of regularity between commutative, noetherian, local rings through a class of local homomorphisms which we call \emph{basically regular}. We give numerical characterizations of these maps, investigate their behavior under composition and decomposition, and compare them with Avramov-Foxby-Herzog's weakly regular local homomorphisms. 
\end{abstract}

\maketitle


\section{Introduction}\label{i}

 Throughout, all rings are commutative and noetherian with identity elements, and ring homomorphisms are unital. Let $(A,m)$ be a  local ring and $I$ a proper ideal of $A$. A  minimal generating set for $I$ is called a \emph{minimal basis}, which corresponds to a vector space basis of $I/m I$ over the field $A/m$. In the important case $I={m}$, the number of elements of a minimal basis is called the \emph{embedding dimension} of $A$ written  $\edim(A)$. The ring $A$ is \emph{regular} if its Krull and embedding dimensions coincide or, equivalently, if it has finite global dimension. The \emph{(embedding) codimension} of $A$ measures its defect of regularity via the formula $\cdim(A):=\edim(A)-\dim(A)$. 

Let $\varphi \colon (A, m,K)\rightarrow (B, n,L)$ be a local homomorphism; that is, a homomorphism of  local rings, which maps $m$ into $n$. Grothendieck  calls $\varphi$ \emph{regular} if it is flat and the \emph{closed fiber ring} $B\otimes_{A} K\cong  B/{mB}$ is geometrically regular  (i.e., $B/{mB}\otimes_{K}F$ is regular for every finite extension $F$ of $ K$ \cite{grothendieck:ega4-4}). Avramov, Foxby and Herzog call $\varphi$ \emph{weakly regular} if it is flat and $B/mB$ is regular \cite{avramov:solh}. Of course, if $\varphi$ is regular, then it is weakly regular.
It is well-known that if $\varphi$ is weakly regular and $A$ is regular then so is $B$; and when $\varphi$ is flat, if $B$ is regular then so is $A$ but $B/{mB}$ need not be~\cite{matsumura:crt}. These descent and ascent results are essentially due  to the  fact that flat local homomorphisms are faithfully flat~\cite{MR979760,MR2284892,MR318138,matsumura:crt}.

Nasseh and Sather-Wagstaff \cite{nasseh:cfwfa} established the following formula comparing the embedding dimensions of rings in a weakly regular local homomorphism: $\edim(B)=\edim(A)+\edim (B/{mB} )$. In view of the classical formula $\dim(B)=\dim(A)+\dim (B/{mB} )$  in this setting~\cite[Theorem 15.1]{matsumura:crt}, one deduces $\cdim(A)=\cdim(B)$, recovering the above fact that $A$ is regular if and only if $B$ is so.

 In this paper, we further investigate the transfer or defect of regularity through local homomorphisms. Most notably, we drop the flatness assumption in the above results and emphasize the behavior of minimal bases which are inherent in the definition of regularity. We call a local homomorphism
 $\varphi \colon  (A, m)\longrightarrow (B, n)$ \emph{basically regular} if any minimal basis of $m$ extends to a subset of a minimal basis of $n$. Comparing to previous notions, we obtain in Corollary~\ref{cor211225a} the second implication:
 \begin{equation}\label{eq211220a}\tag{$\dagger$}
\text{$\varphi$ regular
 $\implies\varphi$ weakly regular
 $\implies\varphi$ basically regular.}
 \end{equation}

In Section~\ref{br}, to facilitate our study of basic regularity, we introduce and investigate a new invariant which measures the defect of regularity of $\varphi$. To this end, we consider the natural linear map of $L$-vector spaces $\overline{\varphi}\colon  m/{m^{2}}\otimes_{K} L\longrightarrow  n/{n^{2}}$ and define the \emph{regular defect} of $\varphi$, denoted $\rd(\varphi)$, to be the nullity of $\overline{\varphi}$. The main result of this section (Theorem~\ref{br:th1}) states that $\varphi$ is basically regular if and only if $\rd(\varphi)=0$ if and only if $\edim(B)=\edim(A)+\edim (B/{mB} )$;  if $\varphi$ is flat, then these conditions are equivalent to $\varepsilon_2(B)=\varepsilon_2(A)+\varepsilon_2 (B/{mB} )$, where $\varepsilon_2$ is the second~deviation. 

Section~\ref{rs} investigates the behavior of the regularity defect under compositions and decompositions. The main result is Theorem~\ref{rs:thm1}, which we apply in~\cite{BKS2}, e.g., to the regular and Cohen factorizations of Avramov, Foxby, and Herzog \cite{avramov:solh}. 

In Section~\ref{bwr}, we further investigate the second implication in~\eqref{eq211220a}.
In particular, we prove in Theorem \ref{bwr:thm1} that under certain circumstances one can reverse said implication:
If $\varphi\colon A\lr B$ is a local homomorphism, then the following conditions are equivalent:
\begin{enumerate}[\rm(1)]
		\item $\varphi$ is basically regular and $B$ is regular;
		\item $A$ and $B/{mB}$ are regular rings and $\dim(B)=\dim(A)+\dim (B/{mB} )$; and
		\item $\varphi$ is weakly regular and $A$ is regular.
	\end{enumerate}

Suitable background on regular rings can be found in \cite{bruns:cmr,eisenbud:ca,grothendieck:ega4-4,MR0345945,matsumura:crt}. Any unreferenced material is standard, as in \cite{MR0345945,matsumura:crt}.

\subsection*{Assumptions and Notation}

Throughout this paper, $\varphi \colon  (A, m, K)\rightarrow (B, n, L)$ denotes a local homomorphism; that is, a homomorphism of  local rings with
$\varphi(m)\subseteq n$ and $K:= A/m$ and $L:= B/n$.
Fix  a proper ideal $I\subset A$, let $\pi_I\colon A\to A/I$ denote the canonical surjection, and let $\varphi_I\colon A/I\to B/IB$ denote the induced map.

Let $\mu_A(I)$ denote the minimal number of generators of $I$. Also, let $\delta_{A}(I)$ denote the  maximal length of a sequence of elements of $I$ which is part of a minimal basis of $m$, and let $\delta^{\varphi}_{B}(I)$ denote the  maximal length of a sequence of elements of $IB$ which is part of a minimal basis of $n$:
\begin{align*}
\mu_A(I)                &=  \dim_K\left({I}/{{m}I}\right)\\
\delta_{A}(I)           &=  \dim_{K}\left((I+m^{2})/{m^{2}}\right)\\
\delta^{\varphi}_{B}(I) &=  \dim_{L}\left(({IB+n^{2}})/{n^{2}}\right)=\delta_B(IB).
\end{align*}
Notice that $\delta_{A}(m)=\mu_{A}(m)=\edim(A)$. Also, note that in \cite{MR3751691}, the quantities $\delta_{A}(I)$ and $\delta^{\varphi}_{B}(I)$ are denoted by $\mu_{A}(I)$ and $\mu^{\varphi}_{B}(I)$, respectively.

The \emph{second deviation} of $A$ is the vector space dimension of the second Andr\'e-Quillen homology group $H_2(A,K,K)$, i.e., $\varepsilon_2(A):= \dim_K(H_2(A,K,K))$. It is known from~\cite{andre:hac} that $A$ is regular (i.e., $\cdim(A)=0$) if and only if $\varepsilon_2(A)=0$,  and from~\cite{MR3771857} that $\varepsilon_2(A)-\cdim(A)$ measures the complete intersection defect of $A$.

\section{Basically Regular Homomorphisms}\label{br}

This section introduces basic regularity and investigates its fundamental properties. The main result of the section is Theorem~\ref{br:th1} which shows that this notion provides a broad framework within which the embedding dimension and the deviation $\varepsilon_2$ (under flatness) exhibit the best possible behavior.

\begin{definition}
\begin{enumerate}[(1)]
\item The map $\varphi$ is said to be \emph{basically regular} if  every minimal basis of $m$ extends to a subset of a minimal basis of $n$. In particular, if $A$ is a field, then $\varphi$ is vacuously basically regular.
\item We define the \emph{regularity defect} of $\varphi$, denoted $\rd(\varphi)$, to be the nullity of $\overline{\varphi}$
where $\overline\varphi$ is the natural linear map of $L$-vector spaces
$\overline{\varphi}\colon   m/{m^{2}}\otimes_KL                  \rightarrow     n/{n^{2}}$
given by $\overline{x}\otimes_K \overline{b} \mapsto        \overline{\varphi(x)b}$,
i.e., the nullity of the natural $K$-linear map $m/m^2\to n/n^2$.
\end{enumerate}
\end{definition}

\begin{example}
The natural flat local map $\varphi\colon  A:=K[\![X^2]\!]\to K[\![X]\!]=:B$ of regular rings is not basically regular since the minimal basis $X^2\in X^2A$ does not extend to 
a subset of a minimal basis of $XB$.
\end{example}

The proof of  Theorem~\ref{br:th1} requires several preparatory results.

\begin{lemma}\label{br:lem0}
Let  $J$ be a proper ideal of $B$ containing $IB$.
Let $x_1, \dots,x_r$ be elements of $I$ such that $\varphi(x_1),\dots,\varphi(x_r)$ is part of a minimal basis of $J$. Then,  $x_1,\dots,x_r$ is part of a minimal basis of $I$.
\end{lemma}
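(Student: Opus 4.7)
The plan is to use the standard Nakayama-type criterion: a sequence of elements of $I$ is part of a minimal basis of $I$ if and only if its image in the $K$-vector space $I/mI$ is linearly independent. So I would reduce to showing that the residues $\overline{x_1},\dots,\overline{x_r}\in I/mI$ are $K$-linearly independent, and then invoke the fact that a linearly independent set in $I/mI$ lifts (via Nakayama's lemma) to part of a minimal generating set of $I$.

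To prove the linear independence, I would argue as follows. Suppose $\sum_{i=1}^r a_i x_i\in mI$ for some $a_i\in A$. Applying $\varphi$ and using $\varphi(m)\subseteq n$ together with $IB\subseteq J$, I obtain
\[
\sum_{i=1}^r \varphi(a_i)\,\varphi(x_i)\in \varphi(mI)B\subseteq n\,IB\subseteq nJ.
\]
By hypothesis, $\varphi(x_1),\dots,\varphi(x_r)$ are part of a minimal basis of $J$, so their residues in $J/nJ$ are $L$-linearly independent. The displayed relation therefore forces $\varphi(a_i)\in n$ for each $i$.

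Finally, I would use that $\varphi$ is local to conclude $a_i\in m$. Concretely, $\varphi^{-1}(n)$ is a proper prime ideal of $A$ containing $m$ (properness because $\varphi(1)=1\notin n$), hence equals $m$ by maximality. Thus each $a_i\in m$, which is exactly what it means for $\overline{x_1},\dots,\overline{x_r}$ to be $K$-linearly independent in $I/mI$. No step looks like a real obstacle here; the only subtle point is the passage $\varphi(mI)B\subseteq nJ$, which is where the hypotheses $\varphi(m)\subseteq n$ and $IB\subseteq J$ are both used, and the appeal to the locality of $\varphi$ to promote $\varphi(a_i)\in n$ back to $a_i\in m$.
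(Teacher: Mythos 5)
Your proof is correct, and it takes a slightly different (and in fact more robust) route than the paper's. The paper works with the spaces $(I+m^{2})/m^{2}$ and $(J+n^{2})/n^{2}$: it takes a dependence relation modulo $m^{2}$, pushes it through $\overline{\varphi}$ into $n/n^{2}$, and concludes the coefficients vanish; this reading of ``part of a minimal basis'' via residues modulo the square of the maximal ideal is exactly what is needed in the one place the lemma is used, namely Theorem~\ref{br:th1} with $I=m$ and $J=n$. You instead use the Nakayama-style characterization in $I/mI$ and $J/nJ$: from $\sum_i a_ix_i\in mI$ you get $\sum_i\varphi(a_i)\varphi(x_i)\in nJ$ (using both $\varphi(m)\subseteq n$ and $IB\subseteq J$), independence of the residues of the $\varphi(x_i)$ in $J/nJ$ forces $\varphi(a_i)\in n$, and locality of $\varphi$ (equivalently, injectivity of the induced map $K\to L$) gives $a_i\in m$. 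Your choice of spaces is the one that literally matches the definition of a minimal basis of an arbitrary ideal: being part of a minimal basis of $J$ yields independence in $J/nJ$, which is what you use, whereas it need not yield independence in $(J+n^{2})/n^{2}$ when $J\subseteq n^{2}$, so your argument establishes the lemma in the stated generality for any proper ideal $J\supseteq IB$. The paper's version, by contrast, directly produces the stronger conclusion that the $x_i$ are independent modulo $m^{2}$ (hence part of a minimal basis of $m$), which is precisely the form needed in its application. You also make explicit the coefficient-transfer step $\varphi(a_i)\in n\Rightarrow a_i\in m$ that the paper leaves implicit; that step is fine as you argue it.
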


\begin{proof}
Assume that $\sum_{1\leq i\leq r}a_{i}\overline{x_{i}}=0$ in the $K$-vector space $(I+m^2)/{m^{2}}$. In the $L$-vector space $(J+n^2)/{n^{2}}$, this yields
$0=\sum_{1\leq i\leq r}a_{i}\overline{\varphi}(\overline{x_{i}}\otimes 1)=
\sum_{1\leq i\leq r}a_{i}\overline{\varphi(x_{i})}$. It follows that $a_{i}=0$, for each $i=1,\dots,r$ as desired.
\end{proof}

\begin{lemma}\label{br:lem1}
We have
$0\leq \delta_{A}(I)-\delta_B^\varphi(I)\leq\rd(\varphi).$
\end{lemma}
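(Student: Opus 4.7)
The plan is to extract both inequalities from a single application of rank--nullity to a carefully chosen restriction of $\overline{\varphi}$. First I would observe that since $K\to L$ is a field extension, tensoring with $L$ over $K$ is exact, so the natural inclusion $(I+m^2)/m^2 \hookrightarrow m/m^2$ induces an injection of $L$-vector spaces
$$W := \bigl((I+m^2)/m^2\bigr)\otimes_K L \;\hookrightarrow\; (m/m^2)\otimes_K L,$$
identifying $W$ with a subspace of the source of $\overline{\varphi}$. Moreover $\dim_L W = \dim_K\bigl((I+m^2)/m^2\bigr) = \delta_A(I)$.

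Next I would consider the restricted map $\psi := \overline{\varphi}|_W \colon W \to n/n^2$. The key identification is that $\Ima(\psi) = (IB+n^2)/n^2$: the image is by construction the $L$-span of the classes $\overline{\varphi(x)}$ with $x\in I$, and this span coincides with the image of $IB$ in $n/n^2$ because every element of $IB$ can be written as $\sum_i b_i\varphi(x_i)$ with $x_i\in I$ and $b_i\in B$. Hence $\dim_L \Ima(\psi) = \delta_B^\varphi(I)$. Finally, $\Ker(\psi) = W\cap \Ker(\overline{\varphi})$ satisfies $\dim_L \Ker(\psi)\leq \dim_L\Ker(\overline{\varphi}) = \rd(\varphi)$.

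Rank--nullity applied to $\psi$ then gives
$$\delta_A(I) \;=\; \dim_L W \;=\; \dim_L\Ima(\psi)+\dim_L\Ker(\psi)\;=\;\delta_B^\varphi(I)+\dim_L\Ker(\psi),$$
from which $0 \leq \delta_A(I) - \delta_B^\varphi(I) = \dim_L\Ker(\psi) \leq \rd(\varphi)$ follows at once. I do not expect any real obstacle here; the only step requiring mild care is the identification $\Ima(\psi) = (IB+n^2)/n^2$, which ultimately rests on the explicit description of elements of $IB$ and on the definition of $\overline{\varphi}$ given in the paper.
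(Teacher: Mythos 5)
Your proof is correct and follows essentially the same route as the paper: both restrict $\overline{\varphi}$ to the subspace $\bigl((I+m^2)/m^2\bigr)\otimes_K L$, identify its image with $(IB+n^2)/n^2$ and its kernel with the intersection with $\Ker(\overline{\varphi})$, and conclude by rank--nullity. The only cosmetic difference is that you extract both inequalities from the single rank--nullity identity, whereas the paper notes the inequality $\delta_B^\varphi(I)\leq\delta_A(I)$ separately before running the same computation.
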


\begin{proof}
The inequality $\delta^{\varphi}_{B}(I)\leq\delta_{A}(I)$ holds because any minimal basis of $I$ extends to a generating sequence of $IB$. Next,  consider the restriction of the $L$-linear map $\overline{\varphi}$ given by
$\overline{\varphi_{I}}\colon ({I+{m}^2})/{{m}^2}\otimes_KL\longrightarrow {n}/{{n}^2}$. Then
$$\Ker(\overline{\varphi_{I}})=\Ker(\overline{\varphi})\cap\left(({I+{m}^2})/{{m}^2}\otimes_KL\right)$$
and
$$\Ima(\overline{\varphi_{I}})=({IB+{n}^2})/{{n}^2}.$$
It follows that
\begin{align*}
\delta_A(I) &=      \dim_K\left(({I+{m}^2})/{{m}^2}\right)\\
            &=      \dim_L\left(({I+{m}^2})/{{m}^2}\otimes_KL\right)\\
            &=      \dim_L\left(({IB+{n}^2})/{{n}^2}\right) +\dim_L\left(\Ker(\overline{\varphi_{I}})\right)\\
            &\leq   \delta_B^\varphi(I)+\dim_L\left(\Ker(\overline{\varphi})\right)
\end{align*}
and thus $\delta_A(I)-\delta_B^\varphi(I)\leq\rd(\varphi)$, as desired.
\end{proof}

\begin{lemma}\label{br:lem1.1}
We have
\begin{align*}
\delta_A(I)&=\edim(A)-\edim (A/I ) &\text{and}&&\delta_B^\varphi(I)&=\edim(B)-\edim (B/{IB} ).
\end{align*}
\end{lemma}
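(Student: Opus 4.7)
The plan is to prove both identities by the same argument, so I would focus on the first one and then remark that the second follows by applying the first to the ring $B$ with the ideal $IB$ (using that $\delta_B^\varphi(I)=\delta_B(IB)$ by definition).

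For the first identity, the key observation is that since $I$ is a proper ideal of a local ring we have $I\subseteq m$, so $I+m^2 \subseteq m$, and hence there is a short exact sequence of $K$-vector spaces
\begin{equation*}
0 \longrightarrow (I+m^2)/m^2 \longrightarrow m/m^2 \longrightarrow m/(I+m^2) \longrightarrow 0.
\end{equation*}
The leftmost term has dimension $\delta_A(I)$ by definition, and the middle term has dimension $\edim(A)$. So additivity of dimension will give the result once I identify the rightmost term with the cotangent space of $A/I$.

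For that identification, the maximal ideal of $A/I$ is $m/I$, and
\begin{equation*}
(m/I)\big/(m/I)^2 = (m/I)\big/\bigl((m^2+I)/I\bigr) \cong m/(m^2+I),
\end{equation*}
whose $K$-dimension equals $\edim(A/I)$ (note that the residue field of $A/I$ is still $K$). Substituting into the displayed exact sequence yields $\edim(A) = \delta_A(I) + \edim(A/I)$, which rearranges to the first identity.

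For the second identity, the same reasoning applies verbatim to the ring $B$ and its proper ideal $IB$ (which is contained in $n$ since $\varphi(m)\subseteq n$), using $\delta_B^\varphi(I)=\delta_B(IB)$ and that the residue field of $B/IB$ is $L$. There is no real obstacle here; the only thing to verify carefully is that $I$ (resp. $IB$) sits inside the maximal ideal so that the short exact sequence makes sense, which is immediate from properness and locality.
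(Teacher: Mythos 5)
Your proof is correct and is essentially the paper's own argument: the paper uses the same short exact sequence of cotangent-space quotients (stated for $B$ and $IB$, with the $A$-identity obtained as the special case $A=B$), while you simply do the $A$-case first and transfer to $B$ afterwards — a cosmetic reordering, not a different route.
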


\begin{proof}
The second equality follows  from the  exact sequence of
$L$-vector spaces
$$0\longrightarrow ({IB+{n}^2})/{{n}^2}\longrightarrow {n}/{{n}^2}\longrightarrow 
{n}/({IB+{n}^2})\cong \dfrac {{n}/IB}{({n}/IB)^2}\longrightarrow 0.$$
The first equality is the special case $A=B$.
\end{proof}

The following corollary exhibits additive behavior of  $\delta$. See Corollary \ref{rs:cor5} for a version of this result for local homomorphisms.

\begin{corollary}\label{br:cor4} Let $I\subseteq J$ be proper ideals of $A$. Then $\delta_A(J)=\delta_A(I)+\delta_{ A/I} (J/I ).$
	
\end{corollary}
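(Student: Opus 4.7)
The plan is to derive the additive identity by a short telescoping argument using Lemma \ref{br:lem1.1}. Since that lemma expresses $\delta_A(-)$ as a difference of embedding dimensions, each of the three terms in the claim is an embedding-dimension difference, and the identity should collapse to an algebraic cancellation.

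First, I would apply the first equality of Lemma \ref{br:lem1.1} twice over the ring $A$ to obtain
\[
\delta_A(J)=\edim(A)-\edim(A/J),\qquad \delta_A(I)=\edim(A)-\edim(A/I).
\]
Next, I would apply the same lemma to the local ring $(A/I,\,m/I,\,K)$ (which is indeed local with residue field $K$, since the proper ideal $I$ is contained in $m$) with the proper ideal $J/I$:
\[
\delta_{A/I}(J/I)=\edim(A/I)-\edim\bigl((A/I)/(J/I)\bigr)=\edim(A/I)-\edim(A/J),
\]
using the standard isomorphism $(A/I)/(J/I)\cong A/J$ in the last equality. Adding this to the formula for $\delta_A(I)$ yields $\edim(A)-\edim(A/J)$, which equals $\delta_A(J)$.

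The main ``obstacle'' is essentially nonexistent: the corollary is an immediate consequence of Lemma \ref{br:lem1.1}. The only mild care required is to justify applying that lemma in the quotient ring, i.e., to note that $J/I$ is a proper ideal of $A/I$ (because $J$ is proper in $A$) and to invoke the third isomorphism theorem. An alternative presentation, slightly more self-contained, would be to establish the short exact sequence of $K$-vector spaces
\[
0\longrightarrow (I+m^2)/m^2\longrightarrow (J+m^2)/m^2\longrightarrow (J+m^2)/(I+m^2)\longrightarrow 0
\]
and identify the right-hand quotient with $\bigl((J/I)+(m/I)^2\bigr)/(m/I)^2$ by writing $(m/I)^2=(m^2+I)/I$ and $(J/I)+(m/I)^2=(J+m^2)/I$; taking $K$-dimensions then gives the formula directly. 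Either route completes the proof in a few lines.
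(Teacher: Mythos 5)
Your proof is correct and follows essentially the same route as the paper: both apply Lemma~\ref{br:lem1.1} over $A$ and over the local ring $A/I$ (using $(A/I)/(J/I)\cong A/J$) and cancel embedding dimensions. The only difference is cosmetic arrangement of the telescoping, so nothing further is needed.
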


\begin{proof} Applying Lemma \ref{br:lem1.1}, we get 
\begin{align*} \delta_{ A/{I}}  (J/{I} )&=\edim  (A/{I} )-\edim \left (\dfrac {A/I}{J/I}\right )
	= \edim  (A/{I} )-\edim  ({A}/{J} )
	=\delta_A(J)-\delta_A(I)
	\end{align*}
as desired.
\end{proof}

\begin{corollary}\label{br:cor5}
	Let $a_1,a_2,\cdots,a_r\in I$ be  part of a minimal basis of $m$. Then $$\delta_A(I)=r+\delta_{ A/{(a_1,a_2,\cdots,a_r)}}  (I/{(a_1,a_2,\cdots,a_r)} ).$$
\end{corollary}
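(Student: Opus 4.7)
The plan is to apply Corollary~\ref{br:cor4} directly, taking the ``smaller'' ideal to be $J:=(a_1,a_2,\ldots,a_r)\subseteq I$. Corollary~\ref{br:cor4} then yields
\[
\delta_A(I)=\delta_A(J)+\delta_{A/J}(I/J),
\]
so the only thing left to verify is that $\delta_A(J)=r$. Once this is done, the statement follows immediately by substitution.

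To establish $\delta_A(J)=r$, I would use the definition $\delta_A(J)=\dim_K((J+m^2)/m^2)$. Since $a_1,\ldots,a_r$ is part of a minimal basis of $m$, their images $\overline{a_1},\ldots,\overline{a_r}$ in $m/m^2$ are $K$-linearly independent, and these images span $(J+m^2)/m^2$ by construction of $J$. Hence the dimension is exactly $r$.

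I do not expect any genuine obstacle here; this is a direct consequence of the additive formula in Corollary~\ref{br:cor4} combined with the standard fact that elements forming part of a minimal basis of $m$ remain linearly independent modulo $m^2$. The only care needed is to note that $J\subseteq I$ (so that Corollary~\ref{br:cor4} applies) and to correctly identify $\delta_A(J)$ with $r$ from the definition.
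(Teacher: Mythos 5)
Your proposal is correct and follows essentially the same route as the paper: apply Corollary~\ref{br:cor4} with the smaller ideal $(a_1,\dots,a_r)\subseteq I$ and observe that $\delta_A((a_1,\dots,a_r))=r$ because the images $\overline{a_1},\dots,\overline{a_r}$ are $K$-linearly independent in $m/m^2$ and span $((a_1,\dots,a_r)+m^2)/m^2$. The only difference is that you spell out this last computation, which the paper leaves implicit.
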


\begin{proof} This follows from Corollary \ref{br:cor4} as $\delta_A((a_1,a_2,\cdots,a_r))=r$.
	\end{proof}

Next, we return to the regularity defect.

\begin{proposition}\label{br:lem2}
One has 
\begin{align*}
\rd(\varphi)    &=  \delta_{A}(m)-\delta^{\varphi}_{B}(m)
                =  \edim(A)+\edim (B/{{m}B} )-\edim(B).
\end{align*} 
Moreover, if $\varphi$ is flat, then $$\rd(\varphi)=\varepsilon_2(A)+\varepsilon_2 (B/{mB} )-\varepsilon_2(B).$$
\end{proposition}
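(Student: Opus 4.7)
The plan for the first two equalities is to apply the rank--nullity theorem directly to $\overline{\varphi}\colon m/m^{2}\otimes_K L\to n/n^{2}$: by definition $\rd(\varphi)=\dim_L\Ker(\overline{\varphi})$; the image of $\overline{\varphi}$ is $(mB+n^{2})/n^{2}$, whose $L$-dimension is $\delta^{\varphi}_{B}(m)$; and the domain has $L$-dimension $\dim_K(m/m^{2})=\edim(A)=\delta_A(m)$. Rank--nullity then yields $\rd(\varphi)=\delta_A(m)-\delta^{\varphi}_{B}(m)$, which is the first equality. The second equality follows on substituting $\delta^{\varphi}_{B}(m)=\edim(B)-\edim(B/mB)$ from the special case $I=m$ of Lemma~\ref{br:lem1.1}.

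For the flat case, the strategy is to invoke the Jacobi--Zariski long exact sequence of Andr\'e--Quillen homology attached to the factorization $A\to B\to L$, coupled with Andr\'e's flat base change theorem. These together produce a six-term exact sequence of $L$-vector spaces
$$0\to H_2(A,K,L)\to H_2(B,L,L)\to H_2(B/mB,L,L)\to m/m^{2}\otimes_K L\xrightarrow{\overline{\varphi}}n/n^{2}\to \bar{n}/\bar{n}^{2}\to 0,$$
where $\bar{n}$ denotes the maximal ideal of $B/mB$. A further application of flat base change (since $L$ is free, hence flat, over $K$) yields $\dim_L H_2(A,K,L)=\varepsilon_2(A)$, and similarly $\dim_L H_2(B,L,L)=\varepsilon_2(B)$ and $\dim_L H_2(B/mB,L,L)=\varepsilon_2(B/mB)$. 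Equating the Euler characteristic of the six-term sequence to zero and combining with the first part then gives
$$\rd(\varphi)=\edim(A)+\edim(B/mB)-\edim(B)=\varepsilon_2(A)+\varepsilon_2(B/mB)-\varepsilon_2(B),$$
as desired.

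The main obstacle I anticipate is the precise assembly of the six-term exact sequence in the flat setting---most delicately, the injectivity at the leftmost $H_2$ term---and the clean identification of the $H_2(R,K_R,L)$ dimensions with the $\varepsilon_2(R)$ invariants. Both are standard consequences of Andr\'e's treatment of homology of commutative algebras~\cite{andre:hac} together with the flat base change theorem; once they are in place, the remainder of the argument is a bookkeeping exercise in dimension counts.
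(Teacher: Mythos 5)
Your proof of the two equalities $\rd(\varphi)=\delta_A(m)-\delta_B^{\varphi}(m)=\edim(A)+\edim(B/mB)-\edim(B)$ is exactly the paper's argument (rank--nullity for $\overline{\varphi}$ plus Lemma~\ref{br:lem1.1} with $I=m$), and the flat case follows the same route as the paper: a six-term Andr\'e--Quillen exact sequence, flat base change to convert its dimensions into $\varepsilon_2(A)$, $\varepsilon_2(B/mB)$, $\varepsilon_2(B)$, $\edim$'s, and an Euler-characteristic count. Two points should be tightened, though. First, the six-term sequence you display is \emph{not} the Jacobi--Zariski sequence of $A\to B\to L$ (whose terms are $H_n(A,B,L)$, $H_n(A,L,L)$, $H_n(B,L,L)$); it is the Jacobi--Zariski sequence of $B\to B/mB\to L$, in which flatness of $\varphi$ gives the base-change isomorphisms $H_i(B,B/mB,L)\cong H_i(A,K,L)$ for $i=1,2$, identifying $H_1(B,B/mB,L)$ with $m/m^2\otimes_K L$ and the connecting map with $\overline{\varphi}$ --- this is precisely how the paper assembles it. Second, the injectivity at the leftmost $H_2$, which you rightly single out as the delicate step, is not a formal consequence of flat base change or of general facts in Andr\'e's book: it is Avramov's theorem that for a flat local homomorphism the natural map $H_2(A,K,L)\to H_2(B,L,L)$ is injective (the paper cites \cite[Remarks~1.4]{MR485836}), which together with the isomorphism $H_2(B,B/mB,L)\cong H_2(A,K,L)$ gives the needed vanishing of the image of $H_3(\overline{B},L,L)$ at that spot. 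With the correct factorization named and that citation supplied, your argument coincides with the paper's proof.
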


\begin{proof}
First note that $\Ima(\overline{\varphi})=({{m}B+{n}^2})/{{n}^2}$. Therefore, we have
\begin{align*}
\rd(\varphi)    &=  \dim_L(\Ker(\overline{\varphi}))\\
                &=  \dim_L\left({m}/{{m}^2}\otimes_KL\right)-\dim_L(\Ima(\overline{\varphi}))\\
                &=
                \dim_K\left({m}/{{m}^2}\right)-\dim_L\left(({{m}B+{n}^2})/{{n}^2}\right)\\
                &=  \delta_{A}(m)-\delta^{\varphi}_{B}(m).
\end{align*}
Further,  Lemma~\ref{br:lem1.1} implies that $\delta_B^\varphi(m)=\edim(B)-\edim (B/{{m}B} )$.

Now, assume that $\varphi$ is flat. Set $\overline{B}:=B/{mB}$ and consider the ring homomorphisms
$B\rightarrow\overline{B}\rightarrow L$.
We get the following portion of the Jacobi-Zariski exact sequence
$$H_2(B,\overline B,L)\rightarrow H_2(B,L,L)\rightarrow H_2(\overline B, L,L)\rightarrow H_1(B,\overline B,L)\rightarrow H_1(B,L,L)\rightarrow H_1(\overline B,L,L)\rightarrow 0.$$
Observe that $H_2(B,\overline B,L)\cong H_2(A,K,L)$ by flat base change. Also, since $\varphi$ is flat, the $L$-linear map $H_2(A,K,L)\rightarrow H_2(B,L,L)$ is injective \cite[Remarks 1.4]{MR485836}. Hence, the map $H_2(B,\overline B,L)\rightarrow H_2(B,L,L)$ is injective. Therefore, we get the next exact sequence
$$0\!\rightarrow H_2(B,\overline B,L)\rightarrow H_2(B,L,L)\rightarrow H_2(\overline B, L,L)\rightarrow H_1(B,\overline B,L)\rightarrow H_1(B,L,L)\rightarrow H_1(\overline B,L,L)\!\rightarrow \! 0$$
By flat base change, we have
$$\dim_L(H_i(B,\overline B,L))=\dim_K(H_i(A,K,K))$$
for $i=1,2$. 
Also, we have $\dim_L(H_1(B,L,L))=\edim(B)$, and similarly for $\edim(\overline B)$ and $\edim(\overline A)$.
It follows that
$$\varepsilon_2(B)+\edim(A)+\edim(\overline B)=\varepsilon_2(A)+\varepsilon_2(\overline B)+\edim(B).$$
Using the above equality, we obtain
$$\rd(\varphi)=\edim(A)+\edim(\overline B)-\edim(B)=\varepsilon_2(A)+\varepsilon_2(\overline B)-\varepsilon_2(B)$$ completing the proof.
\end{proof}

Next, we quickly record an immediate consequence of Proposition \ref{br:lem2}.

\begin{corollary}\label{br:cor1}
	If $mB=n$, e.g., if $\varphi$ is surjective, then $\rd(\varphi)=\edim(A)-\edim(B).$
\end{corollary}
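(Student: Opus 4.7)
The plan is a direct application of Proposition \ref{br:lem2}. Recall that that proposition gives
\[
\rd(\varphi)=\edim(A)+\edim(B/mB)-\edim(B),
\]
so to obtain the desired formula it suffices to show that the hypothesis $mB=n$ forces $\edim(B/mB)=0$.

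First I would observe that if $mB=n$, then $B/mB=B/n=L$, which is a field, hence has embedding dimension $0$. Substituting this into the formula from Proposition \ref{br:lem2} immediately yields $\rd(\varphi)=\edim(A)-\edim(B)$.

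Finally, to justify the parenthetical ``e.g., if $\varphi$ is surjective'', I would note that surjectivity together with $\varphi$ being local implies $mB=n$: indeed, surjectivity gives $mB=\varphi(m)$, and since $\varphi$ is local we have $\varphi(m)\subseteq n$; conversely, writing any $b\in n$ as $b=\varphi(a)$ for some $a\in A$, the fact that $\varphi^{-1}(n)$ is a proper ideal containing $m$ (by locality) forces $a\in m$, so $n\subseteq\varphi(m)=mB$. Thus the surjective case is subsumed by the hypothesis $mB=n$, and no separate argument is needed. There is no real obstacle here; the content is entirely packed into Proposition \ref{br:lem2}, and this corollary is just the extraction of the case when the closed fiber ring is a field.
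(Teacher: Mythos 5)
Your proposal is correct and matches the paper's (implicit) argument: the corollary is stated there as an immediate consequence of Proposition \ref{br:lem2}, exactly because $mB=n$ makes the closed fiber $B/mB=L$ a field with $\edim(B/mB)=0$. Your verification that surjectivity of a local homomorphism forces $mB=n$ is also correct and fills in the only detail the paper leaves unstated.
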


\begin{lemma}\label{br:lem3}
The quantity $\delta_B^\varphi(I)$ is equal to the maximal length of a sequence of elements of $I$ which extends to a subset of a minimal basis of $n$.
\end{lemma}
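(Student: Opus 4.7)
The plan is to prove the equality by establishing both inequalities. Write $s$ for the maximal length of a sequence $x_1,\dots,x_s\in I$ such that $\varphi(x_1),\dots,\varphi(x_s)$ is part of a minimal basis of $n$, and $r:=\delta_B^\varphi(I)=\dim_L((IB+n^2)/n^2)$. We want $s=r$.

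The inequality $s\leq r$ is the easy direction. Given $x_1,\dots,x_s\in I$ whose images $\varphi(x_i)$ extend to a minimal basis of $n$, the classes $\overline{\varphi(x_i)}$ are $L$-linearly independent in $n/n^2$. Since each $\varphi(x_i)\in IB$, these classes lie in the subspace $(IB+n^2)/n^2$, giving $s\leq r$.

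For the reverse inequality $r\leq s$, the key observation is that the subspace $(IB+n^2)/n^2$ is already spanned over $L$ by the classes $\overline{\varphi(x)}$ with $x\in I$, even though a priori elements of $IB$ require $B$-linear combinations of $\varphi(I)$. Indeed, any element of $IB$ has the form $\sum_j b_j\varphi(a_j)$ with $a_j\in I$ and $b_j\in B$; writing each $b_j=\lambda_j+c_j$ with $\lambda_j\in B$ lifting a class in $L$ and $c_j\in n$, one gets $b_j\varphi(a_j)=\lambda_j\varphi(a_j)+c_j\varphi(a_j)$, and since $\varphi(a_j)\in n$ (because $\varphi$ is local), the term $c_j\varphi(a_j)$ lies in $n^2$. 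Hence modulo $n^2$ every element of $IB$ is an $L$-linear combination of classes of the form $\overline{\varphi(a)}$ with $a\in I$.

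Consequently, we may choose $x_1,\dots,x_r\in I$ whose images $\overline{\varphi(x_1)},\dots,\overline{\varphi(x_r)}$ form an $L$-basis of $(IB+n^2)/n^2$. These classes are in particular $L$-linearly independent in $n/n^2$, so $\varphi(x_1),\dots,\varphi(x_r)$ extend to a minimal basis of $n$. This yields $r\leq s$, completing the proof. The main (and only) subtlety is the span argument in the second step; once one sees that the $n$-component of the $B$-coefficients is absorbed into $n^2$, the rest is bookkeeping.
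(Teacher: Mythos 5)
Your proof is correct and follows essentially the same route as the paper: the easy inequality by linear independence, and the reverse via the observation that $(IB+n^2)/n^2$ is spanned over $L$ by the classes $\overline{\varphi(x)}$ with $x\in I$ (the paper compresses your decomposition $b_j=\lambda_j+c_j$ into the identity $\overline{b\varphi(x)}=\overline{b}\,\overline{\varphi(x)}$). No issues.
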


\begin{proof}
Let $t$ denote the maximal length of a sequence of elements of $I$ which extends to a subset of a minimal basis of $n$. First, note that $t\leq\delta_B^\varphi(I)$ by definition of the delta invariant. Next, observe that any  element of the $L$-vector space $({IB+{n}^2})/{{n}^2}$ is a finite combination of elements of the form $\overline{b\varphi(x)}=\overline{b}\ \overline{\varphi(x)}$ with $b\in B$ and $x\in I$. This means that $({IB+{n}^2})/{{n}^2}$ is spanned by $\{\overline{\varphi(x)}\mid x\in I\}$ over $L$. It follows that $\delta_B^\varphi(I)\leq t$, proving the lemma.
\end{proof}

\begin{lemma}\label{br:lem4}
If some minimal basis of $m$ extends to a subset of a minimal basis of $n$, then $\varphi$ is basically regular.
\end{lemma}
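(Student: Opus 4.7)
The plan is to reduce the claim to the coordinate-free condition that the natural $L$-linear map $\overline{\varphi}\colon m/m^{2}\otimes_{K}L\to n/n^{2}$ is injective (equivalently, $\rd(\varphi)=0$). Once this is established, the conclusion for \emph{every} minimal basis follows immediately from linear algebra plus Nakayama's lemma.

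First, I would translate the hypothesis into injectivity of $\overline{\varphi}$. Fix a minimal basis $x_{1},\dots,x_{s}$ of $m$ (so $s=\edim(A)=\delta_{A}(m)$) whose image extends to a subset of a minimal basis of $n$; in particular $\overline{\varphi(x_{1})},\dots,\overline{\varphi(x_{s})}$ is $L$-linearly independent in $n/n^{2}$. Since $\{\overline{x_{i}}\}_{i=1}^{s}$ is a $K$-basis of $m/m^{2}$, the family $\{\overline{x_{i}}\otimes_{K}1\}_{i=1}^{s}$ is an $L$-basis of $m/m^{2}\otimes_{K}L$, and $\overline{\varphi}$ sends it to a linearly independent family. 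Hence $\overline{\varphi}$ is injective, i.e., $\rd(\varphi)=0$. (Alternatively, Lemma~\ref{br:lem3} applied to $I=m$ yields $\delta^{\varphi}_{B}(m)\geq s$, while Lemma~\ref{br:lem1} gives the reverse inequality $\delta^{\varphi}_{B}(m)\leq\delta_{A}(m)=s$; Proposition~\ref{br:lem2} then delivers $\rd(\varphi)=0$.)

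Second, I would propagate the injectivity to an arbitrary minimal basis. Let $y_{1},\dots,y_{s}$ be any minimal basis of $m$. Then $\{\overline{y_{i}}\otimes_{K}1\}_{i=1}^{s}$ is again an $L$-basis of $m/m^{2}\otimes_{K}L$, and injectivity of $\overline{\varphi}$ forces $\{\overline{\varphi(y_{i})}\}_{i=1}^{s}$ to be $L$-linearly independent in $n/n^{2}$. By Nakayama's lemma, $\varphi(y_{1}),\dots,\varphi(y_{s})$ extends to a minimal basis of $n$, which is precisely the condition of basic regularity.

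The proof is essentially a one-line linear algebra observation, so there is no real obstacle; the only conceptual content is to recognize that the apparently choice-dependent hypothesis is the shadow of the basis-free statement that $\overline{\varphi}$ is injective, a condition which does not prefer any particular minimal basis of $m$.
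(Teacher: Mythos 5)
Your proposal is correct and follows essentially the same route as the paper: the hypothesis forces $\overline{\varphi}$ to have full rank on the $s$-dimensional space $m/m^{2}\otimes_{K}L$ (i.e.\ to be injective, $\rd(\varphi)=0$), and this basis-free property then makes the image of any other minimal basis of $m$ linearly independent in $n/n^{2}$, hence part of a minimal basis of $n$. The paper phrases the same argument via $\rank(\overline{\varphi})=\edim(A)$ rather than injectivity, and treats the final Nakayama-type extension step as immediate, so there is no substantive difference.
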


\begin{proof}
Let $y_1,\dots,y_r$ be a minimal basis of ${m}$ such that $\varphi(y_1),\dots,\varphi(y_r)$ are part of a minimal basis of $n$. Then  $\{\overline{y_1}\otimes_K1,\dots,\overline{y_r}\otimes_K1\}$ is a basis for the $L$-vector space $m/{{m}^2}\otimes_KL$ and $\rank(\overline{\varphi})=r$. Now, let $x_1,\dots,x_r$ be another minimal basis of $m$. Then $\{\overline{x_1}\otimes_K1,\dots,\overline{x_r}\otimes_K1\}$ is a basis for the $L$-vector space $m/{{m}^2}\otimes_KL$ and thus $\dim_{L}\langle\overline{\varphi}(x_i\otimes_K1)\mid{1\leq i\leq r}\rangle=\rank(\overline{\varphi})=r$. It follows that $\{\overline{\varphi(x_1)},\dots,\overline{\varphi(x_r)}\}$ is a linearly independent subset of $n/{n^{2}}$; that is, $\varphi(x_1),\dots,\varphi(x_r)$ are part of a minimal basis of $n$, as desired.
\end{proof}

Here is the main result of this section. It shows that basic regularity provides a broad framework within which the embedding dimension and the deviation $\varepsilon_2$ (under flatness) exhibit the best possible behavior. 

\begin{thm}\label{br:th1}
The following conditions are equivalent:
\begin{enumerate}[\rm(1)]
\item $\varphi$ is basically regular;
\item $\delta_{A}(I)=\delta^{\varphi}_{B}(I)$ for each proper ideal $I$ of $A$;
\item $\rd(\varphi)=0$;
\item $\edim(B)=\edim(A)+\edim (B/{mB} )$.
\end{enumerate}
Moreover, if $\varphi$ is flat, then the above conditions are equivalent to:
\begin{enumerate}[\rm(5)]
\item $\varepsilon_2(B)=\varepsilon_2(A)+\varepsilon_2 (B/{mB} )$.
\end{enumerate}
\end{thm}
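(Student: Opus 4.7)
The plan is to set up the cycle $(1) \Rightarrow (2) \Rightarrow (3) \Rightarrow (1)$, while noting that the equivalences $(3) \Leftrightarrow (4)$ and, under flatness, $(3) \Leftrightarrow (5)$ are already packaged in Proposition~\ref{br:lem2}, which rewrites $\rd(\varphi)$ as $\edim(A)+\edim(B/mB)-\edim(B)$ and, in the flat case, as $\varepsilon_2(A)+\varepsilon_2(B/mB)-\varepsilon_2(B)$.

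For $(1) \Rightarrow (2)$, given a proper ideal $I$ of $A$, I would pick a sequence $x_1,\dots,x_{\delta_A(I)} \in I$ that is part of a minimal basis of $m$ (which exists by the very definition of $\delta_A(I)$), extend it to a full minimal basis of $m$, and invoke basic regularity to conclude that the images under $\varphi$ form part of a minimal basis of $n$. Restricting back to $\varphi(x_1),\dots,\varphi(x_{\delta_A(I)})$, Lemma~\ref{br:lem3} then yields $\delta_B^\varphi(I) \geq \delta_A(I)$, while the reverse inequality is supplied by Lemma~\ref{br:lem1}. The implication $(2) \Rightarrow (3)$ is the specialization $I = m$ combined with Proposition~\ref{br:lem2}. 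For $(3) \Rightarrow (1)$, if $\rd(\varphi) = 0$, Proposition~\ref{br:lem2} gives $\delta_B^\varphi(m) = \delta_A(m) = \edim(A)$, so Lemma~\ref{br:lem3} produces $x_1,\dots,x_{\edim(A)} \in m$ whose images in $n$ form part of a minimal basis of $n$; Lemma~\ref{br:lem0} (applied with $I = m$, $J = n$) then forces these $x_i$ to be part of a minimal basis of $m$, hence a full one by length, and Lemma~\ref{br:lem4} closes the loop to deliver basic regularity.

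The only real subtlety is the role of Lemma~\ref{br:lem4}: it is what lets us upgrade the existence of a single well-behaved minimal basis of $m$ to the universal extension statement in the definition of basic regularity, and hence what makes the closing step $(3) \Rightarrow (1)$ work from purely numerical data. Beyond that, once Lemmas~\ref{br:lem0}--\ref{br:lem4} and Proposition~\ref{br:lem2} are in place, each implication reduces to a short vector-space bookkeeping argument, and I do not anticipate any further obstacle.
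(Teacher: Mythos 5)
Your proposal is correct and follows essentially the same route as the paper: the cycle $(1)\Rightarrow(2)\Rightarrow(3)\Rightarrow(1)$ using Lemmas~\ref{br:lem0}, \ref{br:lem1}, \ref{br:lem3}, \ref{br:lem4}, with $(3)\Leftrightarrow(4)$ and, under flatness, $(3)\Leftrightarrow(5)$ read off from Proposition~\ref{br:lem2}. The only cosmetic difference is that the paper gets $\delta_A(I)\leq\delta_B^\varphi(I)$ directly from the definition rather than via Lemma~\ref{br:lem3}, which changes nothing.
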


\begin{proof}
(1) $\Rightarrow$ (2) Let $I$ be a proper ideal of $A$ and let $x_1,\dots,x_r$ be elements of $I$ which are part of a minimal basis of $m$. By (1), $\{\varphi(x_1),\dots,\varphi(x_r)\}$ is a subset of a minimal basis of $n$. Hence, $\delta_A(I)\leq\delta_B^\varphi(I)$. But, by
Lemma~\ref{br:lem1}, we always have $\delta_B^\varphi(I)\leq\delta_A(I)$, and the
equality follows.

(2) $\Rightarrow$ (3) Assume that (2) holds. Then, in particular, $\delta_A({m})=\delta_B^\varphi({m})$. By Proposition~\ref{br:lem2}, we have $\rd(\varphi)=0$, as desired.

(3) $\Rightarrow$ (1) Assume that $\rd(\varphi)=0$ and let $r:=\edim(A)$. Then, by Proposition~\ref{br:lem2},  $\delta_B^\varphi({m})=r$. Hence, by Lemma~\ref{br:lem3}, there exist $x_1,\dots,x_r\in {m}$ such that $\varphi(x_1),\dots,\varphi(x_r)$ is part of a minimal basis of $n$.  Lemma~\ref{br:lem0} implies that $x_1,\dots,x_r$ is a minimal basis of $m$, so $\varphi$ is basically regular by Lemma~\ref{br:lem4}.

(3) $\Leftrightarrow$ (4) This is handled by Proposition~\ref{br:lem2}.

(3) $\Leftrightarrow$ (5) If $\varphi$ is flat, then 
$\rd(\varphi)=\varepsilon_2(A)+\varepsilon_2 (B/{mB} )-\varepsilon_2(B)$ by Proposition \ref{br:lem2}, completing the proof.
\end{proof}

Next, we list a few consequences of Theorem~\ref{br:th1}. 

\begin{corollary}\label{cor211225a} 
If $\varphi$ is weakly regular,
then it is also basically regular.
\end{corollary}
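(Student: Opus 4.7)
The plan is to reduce the statement to Theorem~\ref{br:th1}. Weak regularity of $\varphi$ means exactly that $\varphi$ is flat and $\overline{B} := B/mB$ is regular, so in particular $\varepsilon_2(\overline{B}) = 0$. Thus condition (5) of Theorem~\ref{br:th1}, namely $\varepsilon_2(B) = \varepsilon_2(A) + \varepsilon_2(\overline{B})$, collapses to the single identity $\varepsilon_2(A) = \varepsilon_2(B)$, and it suffices to verify this.

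To establish it I would apply the flat case of Proposition~\ref{br:lem2} to obtain
\[
\rd(\varphi) \;=\; \varepsilon_2(A) + \varepsilon_2(\overline{B}) - \varepsilon_2(B) \;=\; \varepsilon_2(A) - \varepsilon_2(B).
\]
Since $\rd(\varphi)$ is the nullity of an $L$-linear map, it is non-negative, which yields $\varepsilon_2(A) \geq \varepsilon_2(B)$. For the reverse inequality I would reuse the flat-base-change injectivity $H_2(A,K,L)\hookrightarrow H_2(B,L,L)$ that already featured in the proof of Proposition~\ref{br:lem2}: since the $L$-dimensions of source and target are $\varepsilon_2(A)$ and $\varepsilon_2(B)$ respectively, the injection forces $\varepsilon_2(A) \leq \varepsilon_2(B)$. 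Combining the two inequalities gives $\rd(\varphi) = 0$, and the implication (3) $\Rightarrow$ (1) of Theorem~\ref{br:th1} delivers the claim.

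An equally valid and in some ways shorter alternative would be to invoke directly the Nasseh--Sather-Wagstaff identity $\edim(B) = \edim(A) + \edim(B/mB)$ for weakly regular local homomorphisms (recalled in the introduction) and then appeal to the implication (4) $\Rightarrow$ (1) of Theorem~\ref{br:th1}. I do not foresee any real obstacle; the only thing that needs a moment's care is the coefficient-change bookkeeping $\dim_L H_2(A,K,L) = \dim_K H_2(A,K,K) = \varepsilon_2(A)$ used to extract $\varepsilon_2(A) \leq \varepsilon_2(B)$ from the André--Quillen injection, which is immediate from flat base change.
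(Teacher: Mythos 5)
Your proposal is correct, and in fact it contains two proofs. The ``shorter alternative'' you mention at the end is precisely the paper's own argument: quote the identity $\edim(B)=\edim(A)+\edim(B/mB)$ for weakly regular maps from \cite[Lemma~3.1]{nasseh:cfwfa} and apply the implication (4)~$\Rightarrow$~(1) of Theorem~\ref{br:th1}. Your primary route is genuinely different: it stays inside the machinery of Proposition~\ref{br:lem2}, using the flat-case formula $\rd(\varphi)=\varepsilon_2(A)+\varepsilon_2(B/mB)-\varepsilon_2(B)$ together with $\varepsilon_2(B/mB)=0$ (regularity of the closed fiber) and the Avramov injectivity $H_2(A,K,L)\hookrightarrow H_2(B,L,L)$ from \cite[Remarks~1.4]{MR485836}, which the paper already invokes in that proof; the coefficient-change step $\dim_L H_2(A,K,L)=\varepsilon_2(A)$ is indeed immediate since $L$ is flat over $K$. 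The two inequalities $\rd(\varphi)\geq 0$ and $\varepsilon_2(A)\leq\varepsilon_2(B)$ then force $\rd(\varphi)=0$, and (3)~$\Rightarrow$~(1) finishes. What each buys: the paper's route is a two-line reduction to an external result, while yours is self-contained modulo the Andr\'e--Quillen input already present in Section~\ref{br}, and it effectively re-derives the Nasseh--Sather-Wagstaff embedding-dimension formula from the Jacobi--Zariski sequence; as a bonus it shows $\varepsilon_2(A)=\varepsilon_2(B)$ along the way, which is exactly the extra content appearing in Corollary~\ref{brw:cor3}. The cost is heavier machinery (homological) where a purely linear-algebraic statement about minimal bases suffices.
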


\begin{proof}
Assume that $\varphi$ is weakly regular.
Then $\edim(B)=\edim(A)+\edim ( B/{mB} )$ by~\cite[Lemma~3.1]{nasseh:cfwfa}. Thus, Theorem \ref{br:th1} implies that  $\varphi$ is basically regular. 
\end{proof}

It is reasonable to ask when the converse of the preceding corollary holds. 
The next result addresses this in the flat case. This is pursued further in Section~\ref{bwr} below.

\begin{corollary}\label{brw:cor3} 
	The following conditions are equivalent:
	\begin{enumerate}[\rm (1)]
		\item $\varphi$ is flat and basically regular with $\varepsilon_2(A)=\varepsilon_2(B)$;
		\item $\varphi$ is flat and basically regular with $\cdim(A)=\cdim(B)$;
		\item $\varphi$ is weakly regular.
	\end{enumerate}
\end{corollary}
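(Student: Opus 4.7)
The plan is to show the equivalence of the three conditions by cycling through the implications $(3)\Rightarrow(1)$, $(3)\Rightarrow(2)$, $(1)\Rightarrow(3)$, and $(2)\Rightarrow(3)$, each of which reduces to a short computation with tools already in hand (Theorem~\ref{br:th1}, Proposition~\ref{br:lem2}, Corollary~\ref{cor211225a}, and the classical dimension formula for flat local maps). The key observation underlying every direction is that, when $\varphi$ is flat, basic regularity is equivalent to both of the additivity formulas
\[
\edim(B)=\edim(A)+\edim(B/mB)\qquad\text{and}\qquad \varepsilon_{2}(B)=\varepsilon_{2}(A)+\varepsilon_{2}(B/mB),
\]
so the extra hypotheses $\varepsilon_2(A)=\varepsilon_2(B)$ or $\cdim(A)=\cdim(B)$ force the \emph{closed fiber} contribution to vanish, which is exactly what is needed to upgrade basic regularity to weak regularity.

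For $(3)\Rightarrow(1)$, assume $\varphi$ is weakly regular. Then $\varphi$ is flat and $B/mB$ is regular, so $\varepsilon_2(B/mB)=0$ by Andr\'e's criterion, and Corollary~\ref{cor211225a} gives basic regularity. Applying Theorem~\ref{br:th1}(5) then yields $\varepsilon_2(B)=\varepsilon_2(A)$, as required. For $(3)\Rightarrow(2)$, I use the analogous $\edim$ additivity from Theorem~\ref{br:th1}(4) together with the standard dimension formula $\dim(B)=\dim(A)+\dim(B/mB)$ for flat local maps, subtracting to obtain $\cdim(B)=\cdim(A)+\cdim(B/mB)=\cdim(A)$ since $B/mB$ is regular.

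For the converses $(1)\Rightarrow(3)$ and $(2)\Rightarrow(3)$, the strategy is the same: reduce to proving that $B/mB$ is regular. In case $(1)$, Theorem~\ref{br:th1}(5) gives $\varepsilon_2(B)=\varepsilon_2(A)+\varepsilon_2(B/mB)$; combined with $\varepsilon_2(A)=\varepsilon_2(B)$ this forces $\varepsilon_2(B/mB)=0$, hence $B/mB$ is regular by \cite{andre:hac}, and $\varphi$ is weakly regular. In case $(2)$, Theorem~\ref{br:th1}(4) combined with the flat dimension formula gives $\cdim(B)=\cdim(A)+\cdim(B/mB)$; the hypothesis $\cdim(A)=\cdim(B)$ then forces $\cdim(B/mB)=0$, so again $B/mB$ is regular and $\varphi$ is weakly regular.

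There is no serious obstacle here; every implication is a one- or two-line deduction once Theorem~\ref{br:th1} is in place. The only point worth being careful about is invoking the dimension formula $\dim(B)=\dim(A)+\dim(B/mB)$ only in the presence of flatness (which is part of the hypothesis in $(2)$), and quoting Andr\'e's characterization $\varepsilon_2(R)=0\Leftrightarrow R$ regular when passing between $\varepsilon_2$ and $\cdim$ of the fiber $B/mB$.
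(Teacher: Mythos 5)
Your proposal is correct, and it follows essentially the same route as the paper: both arguments reduce everything to the identities $\cdim(B/mB)=\cdim(B)-\cdim(A)$ and $\varepsilon_2(B/mB)=\varepsilon_2(B)-\varepsilon_2(A)$, obtained from Theorem~\ref{br:th1}/Proposition~\ref{br:lem2} together with the flat dimension formula, and then invoke the characterizations of regularity of the closed fiber via $\cdim=0$ or $\varepsilon_2=0$. The paper merely packages your four implications as a single ``without loss of generality $\varphi$ is flat and basically regular'' reduction, so the content is identical.
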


\begin{proof}
By definition and Corollary~\ref{cor211225a}, we assume without loss of generality that $\varphi$ is flat and basically regular, therefore $\rd(\varphi)=0$.
The standard dimension equality $\dim(B/mB)=\dim(B)+\dim(A)$ for flat maps,  with Proposition~\ref{br:lem2}, implies that
\begin{align*}
\cdim(B/mB)&=\cdim(B)-\cdim(A)\\
\varepsilon_2(B/mB)&=\varepsilon_2(B)-\varepsilon_2(A)
\end{align*}
Since $B/mB$ is regular if and only if $\cdim(B/mB)=0$ if and only if $\varepsilon_2(B/mB)=0$, the result now follows.
\end{proof}

\begin{corollary}\label{br:cor2} 
	\begin{enumerate}[\rm (i)]
	\item $\rd(\pi_I)=\edim(A)-\edim ( A/I )=\delta_A(I).$
	\item The following conditions are equivalent:
	\begin{enumerate}[\rm(1)]
		\item $\pi_I$ is basically regular;
		\item $\edim(A)=\edim ( A/I )$;
		\item $I\subseteq m^{2}$.
	\end{enumerate}
	\item $\rd(\pi_m)=\edim(A)$, where $\pi_m\colon A\lr K= A/m$ is the canonical surjection.
	\end{enumerate}
\end{corollary}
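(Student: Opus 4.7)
The plan is to deduce all three parts as direct corollaries of the machinery already assembled in this section. The key preliminary observation is that for the canonical surjection $\pi_I\colon A\to A/I$, the maximal ideal of $A/I$ is exactly $m/I = m\cdot(A/I)$, so the closed fiber is $(A/I)/m(A/I) = A/m = K$, a field of embedding dimension zero. This makes $\pi_I$ tailor-made for the equalities in Corollary~\ref{br:cor1} and the simplified form of Theorem~\ref{br:th1}.

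For part (i), I would invoke Corollary~\ref{br:cor1}, which applies since $\pi_I$ is surjective and therefore $m\cdot(A/I)$ is the full maximal ideal of $A/I$. This immediately gives $\rd(\pi_I) = \edim(A) - \edim(A/I)$. The second equality $\edim(A) - \edim(A/I) = \delta_A(I)$ is the $A=B$ instance of Lemma~\ref{br:lem1.1}.

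For part (ii), the equivalence $(1)\Leftrightarrow(2)$ follows at once from Theorem~\ref{br:th1}, specifically $(1)\Leftrightarrow(4)$: since the closed fiber has embedding dimension $0$, the basic regularity condition $\edim(B) = \edim(A) + \edim(B/mB)$ reduces to $\edim(A/I) = \edim(A)$. For $(2)\Leftrightarrow(3)$, I would chain the identifications
\[
\edim(A) = \edim(A/I) \iff \delta_A(I) = 0 \iff (I+m^2)/m^2 = 0 \iff I\subseteq m^2,
\]
where the first step uses part (i), the second uses the defining formula $\delta_A(I) = \dim_K((I+m^2)/m^2)$, and the third is tautological.

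For part (iii), I would simply specialize part (i) to $I = m$: since $A/m = K$ has embedding dimension $0$, part (i) yields $\rd(\pi_m) = \edim(A) - 0 = \edim(A)$. I do not expect any substantive obstacle here; the statement is essentially a bookkeeping exercise that collects the already-established formulas in the surjective case.
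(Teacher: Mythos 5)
Your proposal is correct and follows essentially the same route as the paper: part (i) from Corollary~\ref{br:cor1} together with Lemma~\ref{br:lem1.1}, part (ii) from (i) and Theorem~\ref{br:th1} combined with the observation that $\delta_A(I)=\dim_K\bigl((I+m^2)/m^2\bigr)$ vanishes exactly when $I\subseteq m^2$, and part (iii) by specializing (i) to $I=m$. The only cosmetic difference is that you pass through condition (4) of Theorem~\ref{br:th1} (via the closed fiber $K$ having embedding dimension $0$) where the paper phrases the step via $\rd(\pi_I)=0$, which is an equivalent bookkeeping choice.
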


\begin{proof} (i) This follows from the combination of Corollary \ref{br:cor1} and Lemma \ref{br:lem1.1}.

	(ii) This follows  from (i) and Theorem \ref{br:th1} since $ \rd(\pi_I)=0$ if and only if $I\subseteq m^2$.
	
	(iii) This also follows directly from (i).
\end{proof}

\begin{corollary}\label{br:cor3} 
\begin{enumerate}[\rm (1)]
	\item One has
	$\rd(\pi_I)\geq \rd(\pi_{IB}).$
	\item Then $\varphi$ is basically regular if and only if $\rd(\pi_I)=\rd(\pi_{IB})$ for each proper ideal $I$ of $A$.
	\end{enumerate}
\end{corollary}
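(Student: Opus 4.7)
The plan is to reduce everything to the invariants $\delta_A(I)$ and $\delta_B^\varphi(I)$ via Corollary~\ref{br:cor2}(i), after which both parts become nearly tautological consequences of earlier results. Specifically, Corollary~\ref{br:cor2}(i) applied to $\pi_I\colon A\to A/I$ gives $\rd(\pi_I)=\delta_A(I)$, and applied to $\pi_{IB}\colon B\to B/IB$ (which is legitimate since $I\subseteq m$ forces $IB\subseteq mB\subseteq n$, so $IB$ is proper in $B$) gives $\rd(\pi_{IB})=\delta_B(IB)=\delta_B^\varphi(I)$, where the last equality is just the notational identity recorded in the introduction.

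For part~(1), once we have rewritten $\rd(\pi_I)-\rd(\pi_{IB})=\delta_A(I)-\delta_B^\varphi(I)$, the desired inequality is exactly the left half of Lemma~\ref{br:lem1}, namely $0\leq\delta_A(I)-\delta_B^\varphi(I)$. So part~(1) is immediate once the translation has been made.

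For part~(2), the same translation shows that the condition $\rd(\pi_I)=\rd(\pi_{IB})$ for every proper ideal $I$ of $A$ is equivalent to $\delta_A(I)=\delta_B^\varphi(I)$ for every proper ideal $I$ of $A$. This is precisely condition~(2) of Theorem~\ref{br:th1}, which is equivalent to basic regularity of $\varphi$. Hence (2) follows directly from Theorem~\ref{br:th1}\,(1)$\Leftrightarrow$(2).

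There is no real obstacle here; the only thing that requires a moment of attention is checking that $\pi_{IB}$ is a legitimate candidate for applying Corollary~\ref{br:cor2}, i.e.\ that $IB$ is a proper ideal of the local ring $B$, which is automatic from the locality of $\varphi$. The entire proof can therefore be written in a few lines, invoking Corollary~\ref{br:cor2}(i), Lemma~\ref{br:lem1}, and Theorem~\ref{br:th1} in turn.
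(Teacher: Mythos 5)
Your proof is correct and follows exactly the paper's own route: translating $\rd(\pi_I)=\delta_A(I)$ and $\rd(\pi_{IB})=\delta_B^\varphi(I)$ via Corollary~\ref{br:cor2}(i), then invoking Lemma~\ref{br:lem1} for part~(1) and the equivalence (1)$\Leftrightarrow$(2) of Theorem~\ref{br:th1} for part~(2). No issues.
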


\begin{proof} (1) This comes from Lemma \ref{br:lem1} and Corollary \ref{br:cor2} as $\delta_B^{\varphi}(I)=\delta_B(IB)=\rd(\pi_{IB})$.

	(2) This is a direct consequence of Theorem \ref{br:th1} and Corollary~\ref{br:cor2}.
	\end{proof}

\section{Compositions and Decompositions}\label{rs}

This section investigates the behavior of the regularity defect of local ring homomoprhisms within commutative diagrams. The main result here is Theorem~\ref{rs:thm1}, which we apply subsequently~\cite{BKS2}, e.g., to the regular and Cohen factorizations of Avramov, Foxby, and Herzog \cite{avramov:solh}.  

\begin{definition} 
An \emph{oriented commutative diagram of local homomorphisms} is a commutative diagram $\mathcal T$ of local homomorphisms 
equipped with an orientation, either clockwise or anticlockwise. 
In practice, we simply write that the diagram is 
a commutative diagram of local homomorphisms with the clockwise or anticlockwise orientation.
\end{definition} 

\begin{definition} Consider the next commutative triangle of local homomorphisms 
\begin{equation}
\begin{split}
\xymatrix{ A\ar[r]^\varphi\ar[rd]_{\psi\circ\varphi}&B\ar[d]^\psi\\
	&C.}
\end{split}
\label{diag211222b}\tag{$\mathcal{T}$}
\end{equation}
	\begin{enumerate}[1)]
\item If $\mathcal T$ is oriented with the clockwise orientation, the \emph{regularity defect} of $\mathcal T$ is defined to be the integer $$\rd(\mathcal T):=(\rd(\varphi)+\rd(\psi))-\rd(\psi\circ\varphi).$$ If $\mathcal T$ is oriented in the anticlockwise direction, then $$\rd(\mathcal T):=\rd(\psi\circ\varphi)-(\rd(\varphi)+\rd(\psi)).$$ 
\item In either orientation, the triangle $\mathcal T$ is \emph{basically regular} if $\rd(\mathcal T)=0$, that is, if $\rd(\psi\circ \varphi)=\rd(\varphi)+\rd(\psi)$.
Note that this is independent of the orientation.
	\end{enumerate}
Analogous definitions are made for other oriented commutative diagrams.
\end{definition} 

\begin{remark} \label{rmk211223a}
Consider an oriented commutative square of  local  homomorphisms
\begin{equation}
\begin{split}
\xymatrix{
	A\ar[r]^\varphi\ar[d]_{\varphi^{\prime}} &B\ar[d]^{\psi}\\
	C\ar[r]_{\psi^{\prime}}&D.}
\end{split}
\label{diag211222a}\tag{$\mathcal{S}$}
\end{equation}
Then $(\mathcal S)$ consists of the union of the following commutative triangles $(ABD)$ and $(ADC)$ which are oriented in opposite directions \[\xymatrix{
	A\ar[r]^\varphi\ar[rd]^\theta\ar[d]_{\varphi^{\prime}} &B\ar[d]^{\psi}\\
	C\ar[r]_{\psi^{\prime}}&D}
\] where $\theta:=\psi\circ\varphi=\psi^{\prime}\circ\varphi^{\prime}$. It is straightforward to check that if $(\mathcal S)$ has the anticlockwise orientation, then
\begin{align*}\rd(ABD)+\rd(ADC)&=(\rd(\psi)+\rd(\varphi))-(\rd(\psi^{\prime})+\rd(\varphi^{\prime}))=\rd(ABDC)\end{align*}
and similarly if $(\mathcal S)$ has the anticlockwise orientation.
In either case, the regularity defect of a commutative square is the sum of the regularity defects of its two commutative triangles. One may interpret this as measuring how far the regularity defect is preserved along different paths from $A$ to $D$.
\end{remark}

Here is the main theorem of this section. It describes the behavior of the regularity defect of a commutative square under surjective base change.

\begin{thm} \label{rs:thm1} Consider the following commutative square of local ring homomorphisms
	\begin{equation}
	\begin{split}
	\xymatrix{
		A\ar[r]^\varphi\ar[d]_{\varphi^{\prime}} &B\ar[d]^{\psi}\\
		(C,m^{\prime})\ar[r]_{\psi^{\prime}}&D}
	\end{split}\tag{$\mathcal S$}
	\end{equation} with the clockwise orientation, and consider the induced diagram 
		\begin{equation}
	\begin{split}
	\xymatrix{
		A/I\ar[r]^{\varphi_I}\ar[d]_{\varphi^{\prime}_I} &B/IB\ar[d]^{\psi_{IB}}\\
		C/IC\ar[r]_{\psi^{\prime}_{IC}}&D/ID}
		\end{split}\tag{$(A/I)\otimes_A\mathcal S$}
	\end{equation} 
	also with the clockwise orientation.
	Then we have
	$$\rd(\mathcal S)=\rd ((A/I)\otimes_A\mathcal S )=\rd (K\otimes_A\mathcal S )=\rd(\psi_{mB})-\rd(\psi^{\prime}_{mC}).$$
	\end{thm}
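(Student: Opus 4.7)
The plan is to reduce all three regularity defects to a single explicit formula in embedding dimensions via Proposition~\ref{br:lem2}, which yields
\[\rd(\xi) = \edim(R) + \edim(S/rS) - \edim(S)\]
for any local homomorphism $\xi\colon (R,r)\to (S,s)$. First I would expand $\rd(\mathcal S)$ using Remark~\ref{rmk211223a} as $(\rd(\varphi)+\rd(\psi))-(\rd(\varphi')+\rd(\psi'))$, then apply Proposition~\ref{br:lem2} to each of the four terms. The ambient dimensions $\edim(A)$, $\edim(B)$, $\edim(C)$, $\edim(D)$ cancel in pairs, leaving
\[\rd(\mathcal S) = \edim(B/mB) + \edim(D/nD) - \edim(C/mC) - \edim(D/m^{\prime}D),\]
where $m^{\prime}$ is the maximal ideal of $C$ (the maximal ideal of $D$ gets absorbed in the fiber descriptions).

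Next, for $\rd((A/I)\otimes_A\mathcal S)$ I would repeat the same strategy applied to the four induced local maps $\varphi_I$, $\psi_{IB}$, $\varphi'_I$, $\psi'_{IC}$. The key simplification is that $I\subseteq m$, combined with the locality of $\varphi,\psi,\varphi',\psi'$, yields $IB\subseteq mB\subseteq n$, $IC\subseteq mC\subseteq m^{\prime}$, and $ID\subseteq nD\cap m^{\prime}D$. Consequently the closed fibers of the four induced maps are $B/mB$, $D/nD$, $C/mC$, $D/m^{\prime}D$ respectively---for instance, that of $\varphi_I$ is $(B/IB)/((mB+IB)/IB)=B/mB$, and that of $\psi_{IB}$ is $D/(nD+ID)=D/nD$. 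After applying Proposition~\ref{br:lem2} to each map and cancelling the ambient dimensions $\edim(A/I)$, $\edim(B/IB)$, $\edim(C/IC)$, $\edim(D/ID)$, I recover exactly the expression displayed above for $\rd(\mathcal S)$. Taking $I=m$ then gives $\rd(K\otimes_A\mathcal S)=\rd(\mathcal S)$ as a special case.

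For the final equality, I would expand $\rd(\psi_{mB})-\rd(\psi'_{mC})$ directly: both maps have target $D/mD$, and their closed fibers simplify to $D/(nD+mD)=D/nD$ and $D/(m^{\prime}D+mD)=D/m^{\prime}D$ using $mD\subseteq nD$ and $mD\subseteq m^{\prime}D$. The common $\edim(D/mD)$ terms cancel in the difference, leaving exactly $\edim(B/mB)+\edim(D/nD)-\edim(C/mC)-\edim(D/m^{\prime}D)$, the same expression as before.

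The main obstacle is purely bookkeeping---tracking six local homomorphisms, their source-target pairs, and several ideal extensions. The single conceptual point is that the hypothesis $I\subseteq m$ is exactly what forces the closed fibers of the induced diagram to coincide with those of $\mathcal S$, so that the residual sums after cancellation are identical in all three computations.
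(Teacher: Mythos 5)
Your argument is correct and follows essentially the same route as the paper: reduce each regularity defect to embedding dimensions of closed fibers via Proposition~\ref{br:lem2}, identify those fibers (using $IB\subseteq mB\subseteq n$, $IC\subseteq mC\subseteq m'$, $mD\subseteq nD\cap m'D$), and cancel the ambient terms. The only organizational difference is that you compute $\rd((A/I)\otimes_A\mathcal S)$ directly for arbitrary $I$ and obtain the $K\otimes_A\mathcal S$ case as the specialization $I=m$, whereas the paper establishes the case $I=m$ first and then deduces the general case by applying it to the reduced square $(A/I)\otimes_A\mathcal S$; both are valid.
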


\begin{proof} In the following sequence, the first equality is by definition and the second one is from Proposition~\ref{br:lem2} 
\begin{align*}\rd(\mathcal S)&=(\rd(\varphi)+\rd(\psi))-(\rd(\varphi^{\prime})+\rd(\psi^{\prime}))\\
	&=\edim (B/{mB} )+\edim (D/{nD} )-\edim ( C/{mC} )-\edim ( D/{m^{\prime}D} ).\end{align*}
In the next sequence, the first and third steps are by definition
\begin{align*}
\rd (K\otimes_A\mathcal S )
&=(\rd (\varphi_m)+\rd(\psi_{mB}))-(\rd(\varphi^{\prime}_{m})+\rd(\psi^{\prime}_{mC}))\\
&=\rd(\psi_{mB})-\rd(\psi^{\prime}_{mC}) \\
&=\edim ( B/{mB} )+\edim ( D/{nD} )-\edim ( D/{mD} )-\edim ( C/{mC} )\\
&\quad -\edim ( D/{m^{\prime}D} )+\edim ( D/{mD} )\\
&=\edim ( B/{mB} )+\edim ( D/{nD} )-\edim ( C/{mC} )-\edim ( D/{m^{\prime}D} )
\end{align*}
The second step follows from the fact that $A/m$ is a field.
Hence, we have 
$$\rd(\mathcal S)=\rd (K\otimes_A\mathcal S )=\rd(\psi_{mB})-\rd(\psi^{\prime}_{mC}).$$ 
By applying this to the commutative square $(A/I)\otimes_A\mathcal S$, we get 
$$\rd ((A/I)\otimes_A\mathcal S )=\rd\left (\dfrac {A/I}{m/I}\otimes_{A/I}\left (( A/I)\otimes_A\mathcal S\right )\right )=\rd (K\otimes_A\mathcal S ).$$ 
These steps provide the desired equalities.
\end{proof}

\begin{corollary}\label{rs:cor2} With $\mathcal S$  as in Theorem~\ref{rs:thm1}, the following conditions are equivalent. 
	\begin{enumerate}[\rm(1)]
		\item $\mathcal S$ is a basically regular square;
		\item For each proper ideal $I$ of $A$ the induced diagram $\mathcal S_I=(A/I)\otimes_A\mathcal S$ is  basically regular;
		\item The induced diagram $\mathcal S_m=K\otimes_A\mathcal S$ is  basically regular;
		\item $\rd(\psi_{mB})=\rd(\psi^{\prime}_{mC})$.
	\end{enumerate}
\end{corollary}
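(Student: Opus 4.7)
The plan is to extract this corollary as an immediate consequence of Theorem~\ref{rs:thm1}, since all four conditions reduce to the vanishing of the same numerical invariant. Unpacking the definitions, condition (1) says $\rd(\mathcal S)=0$, condition (2) says $\rd(\mathcal S_I)=0$ for every proper ideal $I$ of $A$, condition (3) says $\rd(\mathcal S_m)=0$, and condition (4) is the equation $\rd(\psi_{mB})-\rd(\psi^{\prime}_{mC})=0$. So the entire corollary is just the assertion that these four vanishing statements coincide.

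The first step in the proof would be to invoke Theorem~\ref{rs:thm1}, which yields the chain of identities
$$\rd(\mathcal S)=\rd(\mathcal S_I)=\rd(\mathcal S_m)=\rd(\psi_{mB})-\rd(\psi^{\prime}_{mC})$$
valid for every proper ideal $I$ of $A$. Reading the equivalence (1)$\Leftrightarrow$(4) off the outer equality is immediate, and (3)$\Leftrightarrow$(4) follows from the same identity applied with $I=m$. For (2)$\Leftrightarrow$(4), note that Theorem~\ref{rs:thm1} shows $\rd(\mathcal S_I)$ is independent of the choice of $I$, so the apparent \emph{for-all} quantifier in (2) collapses: (2) holds if and only if it holds for a single choice of $I$ (say $I=m$), if and only if (4) holds.

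There is no substantive obstacle here, since the technical content has already been absorbed into Theorem~\ref{rs:thm1}; the proof is essentially bookkeeping. The only point worth flagging is that the statement of (2) should really be interpreted through the lens of Theorem~\ref{rs:thm1}: the theorem collapses the a priori infinite family $\{\rd(\mathcal S_I)\}_I$ to a single invariant, and this is what makes (2) equivalent to (3) rather than strictly stronger.
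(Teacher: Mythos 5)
Your proposal is correct and matches the paper's (implicit) argument: the corollary is stated as an immediate consequence of Theorem~\ref{rs:thm1}, whose chain of equalities shows all four conditions assert the vanishing of the same invariant. Your remark that the universal quantifier in (2) collapses because $\rd(\mathcal S_I)$ is independent of $I$ is exactly the right bookkeeping point.
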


\begin{corollary}\label{rs:cor1} The commutative diagram 
	\begin{equation}
	\begin{split}
		\xymatrix{
		A\ar[r]^\varphi\ar[d]_{\pi_{I}} &B\ar[d]^{\pi_{IB}}\\
		A/I\ar[r]_{\varphi_I}&B/IB}
		\end{split}\tag{$\mathcal S_I$}
	\end{equation} is basically regular, and thus 
		\begin{equation}
\rd(\varphi_I)=\rd(\varphi)-(\rd(\pi_I)-\rd(\pi_{IB}))\leq \rd(\varphi).\label{eq211223a}
\end{equation}  Consequently, if $\varphi$ is basically regular, then so is $\varphi_I$.
	
\end{corollary}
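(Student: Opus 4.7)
The plan is to recognize $\mathcal{S}_I$ as an instance of the square $\mathcal{S}$ in Theorem~\ref{rs:thm1} and apply that theorem to compute $\rd(\mathcal{S}_I)$. Using the dictionary $\varphi'=\pi_I$, $\psi=\pi_{IB}$, $\psi'=\varphi_I$, $C=A/I$, $D=B/IB$, the maximal ideal of $C$ is $m'=m/I$ (since $I\subseteq m$, as $I$ is proper and $A$ is local), and I need to identify the two reduced maps appearing in the formula $\rd(\mathcal S)=\rd(\psi_{mB})-\rd(\psi'_{mC})$. Since $IB\subseteq mB$, the third isomorphism theorem yields $(B/IB)/(mB\cdot(B/IB))\cong B/mB$ and $(A/I)/(mC)=(A/I)/(m/I)\cong K$. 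Under these identifications, $\psi_{mB}\colon B/mB\to B/mB$ becomes the identity, while $\psi'_{mC}\colon K\to B/mB$ is a local homomorphism whose source is a field.

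Both reduced maps then have zero regularity defect: the identity trivially, and a local homomorphism out of a field vacuously by the convention in the definition of basic regularity. Theorem~\ref{rs:thm1} thus gives $\rd(\mathcal{S}_I)=\rd(\psi_{mB})-\rd(\psi'_{mC})=0$, i.e., $\mathcal{S}_I$ is basically regular. Unpacking the regularity defect of the square yields $\rd(\varphi)+\rd(\pi_{IB})=\rd(\pi_I)+\rd(\varphi_I)$, which rearranges directly to the displayed identity~\eqref{eq211223a}. The inequality $\rd(\varphi_I)\leq\rd(\varphi)$ then follows from Corollary~\ref{br:cor3}(1), which supplies $\rd(\pi_I)\geq\rd(\pi_{IB})$. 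For the final consequence, if $\varphi$ is basically regular then $\rd(\varphi)=0$ by Theorem~\ref{br:th1}, hence $\rd(\varphi_I)\leq 0$; but since the regularity defect is defined as a nullity and is therefore non-negative, $\rd(\varphi_I)=0$, so $\varphi_I$ is basically regular.

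The only delicate step — the main obstacle, such as it is — is the explicit identification of the two reduced maps $\psi_{mB}$ and $\psi'_{mC}$ and the verification that the relevant quotients collapse as claimed via $IB\subseteq mB$. Everything thereafter is formal and amounts to chaining Theorem~\ref{rs:thm1}, Corollary~\ref{br:cor3}(1), and Theorem~\ref{br:th1}.
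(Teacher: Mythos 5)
Your proposal is correct and follows essentially the paper's route: the paper invokes Theorem~\ref{rs:thm1} (via Corollary~\ref{rs:cor2}, noting that $K\otimes_A\mathcal S_I$ has identity vertical maps and is therefore basically regular), which is the same computation as your identification of $\psi_{mB}$ with $1_{B/mB}$ and $\psi'_{mC}$ with a map out of the field $K$, both of vanishing regularity defect. The remaining steps --- rearranging the square's defect to get~\eqref{eq211223a}, using Corollary~\ref{br:cor3}(1) for the inequality, and nonnegativity of $\rd$ for the last consequence --- match the paper's argument.
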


\begin{proof} Observe that $ K\otimes_A\mathcal S_I$ has the form
\[\xymatrix{
		K\ar[r]^-{K\otimes_A\varphi}\ar[d]_{1_{K}} &B/mB\ar[d]^{1_{B/mB}}\\
		K\ar[r]_-{K\otimes_A\varphi}&B/mB}
	\] which is basically regular by definition. Thus, Corollary \ref{rs:cor2} implies that $\mathcal S_I$ is also basically regular, hence the equality in~\eqref{eq211223a}. The  inequality in~\eqref{eq211223a} follows from Corollary \ref{br:cor3}, which implies that $0\leq \rd(\pi_I)-\rd(\pi_{IB})$.
\end{proof}

\begin{corollary} \label{rs:cor3} Given a commutative triangle of   local ring homomorphisms 
	\begin{equation}
	\begin{split}
\xymatrix{ A\ar[r]^\varphi\ar[rd]_{\theta}&B\ar[d]^\psi\\
		&C}\end{split}\tag{$\mathcal T$}
	\end{equation} with the clockwise orientation, one has $$\rd(\mathcal T)=\rd (( A/I)\otimes_A\mathcal T )=\rd (K\otimes_A\mathcal T )=\rd(\psi_{mB}).$$
\end{corollary}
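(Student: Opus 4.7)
The plan is to realize the triangle $\mathcal T$ as a degenerate commutative square---namely a square one of whose edges is an identity map---and then apply Theorem~\ref{rs:thm1} directly. To this end, I would consider the commutative square of local homomorphisms
$$
\xymatrix{
A\ar[r]^\varphi\ar[d]_{\theta}&B\ar[d]^\psi\\
C\ar[r]_{1_C}&C}
$$
with the clockwise orientation; commutativity is exactly the relation $\theta=\psi\circ\varphi$, and $1_C$ is trivially a local homomorphism.

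The key observation is that $\rd(1_C)=0$ since $1_C$ is vacuously basically regular (apply Theorem~\ref{br:th1}), and the analogous statement holds for each base change $(1_C)_{IC}\colon C/IC\to C/IC$, which is again an identity map. Unwinding the definition of the regularity defect for $\mathcal S$ with the clockwise orientation therefore yields
\begin{align*}
\rd(\mathcal S)&=(\rd(\varphi)+\rd(\psi))-(\rd(\theta)+\rd(1_C))=\rd(\mathcal T),\\
\rd((A/I)\otimes_A\mathcal S)&=\rd((A/I)\otimes_A\mathcal T),\\
\rd(K\otimes_A\mathcal S)&=\rd(K\otimes_A\mathcal T).
\end{align*}

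Applying Theorem~\ref{rs:thm1} to $\mathcal S$ now yields
$$\rd(\mathcal S)=\rd((A/I)\otimes_A\mathcal S)=\rd(K\otimes_A\mathcal S)=\rd(\psi_{mB})-\rd((1_C)_{mC}),$$
and since $(1_C)_{mC}$ is the identity map on $C/mC$ we have $\rd((1_C)_{mC})=0$. Combining this with the identifications from the previous paragraph produces the desired chain of equalities. I do not foresee any substantive obstacle; the only thing to verify is that the identity maps appearing as the fourth edge contribute zero to the regularity defect both before and after base change, which is immediate from the definitions.
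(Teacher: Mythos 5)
Your proof is correct and is essentially the paper's argument: the paper also completes $\mathcal T$ to a degenerate commutative square and applies Theorem~\ref{rs:thm1}, the only difference being that it duplicates $A$ (identity map $\id_A$ as the left vertical edge, with $\theta$ along the bottom) whereas you duplicate $C$ (identity $1_C$ along the bottom). Your placement makes the vanishing of the correction terms $\rd(1_C)$ and $\rd\bigl((1_C)_{mC}\bigr)$ completely trivial, while the paper's version instead uses that the triangle with the repeated vertex has regularity defect zero by definition; both are minor variants of the same idea.
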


\begin{proof} Apply Theorem \ref{rs:thm1} with the square
\[\xymatrix{ A\ar[r]^\varphi\ar[rd]_{\theta}\ar[d]_=&B\ar[d]^\psi\\
		A\ar[r]_\theta&C}\]
noting that $\rd(\mathcal S)=\rd(\mathcal T)+\rd(ACA)=\rd(\mathcal T)$; the first equality here is by Remark~\ref{rmk211223a}, and the second one is by definition.
	\end{proof}

\begin{corollary}\label{rs:cor4}
	Let $\psi\colon B\lr C$ be another local ring homomorphism. Then 
\begin{align}\rd(\psi\circ\varphi)&=\rd(\varphi)+\rd(\psi)-\rd(\psi_{mB})=\rd(\varphi)+(\rd(\pi_{mB})-\rd(\pi_{mC})).\label{eq211224a}
\end{align}
Consequently, 	\begin{equation}\rd(\varphi)\leq\rd(\psi\circ\varphi)\leq\rd(\varphi)+\rd(\psi).\label{eq211224b}\end{equation}
\end{corollary}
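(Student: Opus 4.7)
The plan is to derive both equalities in \eqref{eq211224a} by invoking the two results just proved, namely Corollary~\ref{rs:cor3} (applied to the composed triangle $A\to B\to C$) and Corollary~\ref{rs:cor1} (applied to $\psi$ with the $B$-ideal $mB$). The inequalities in \eqref{eq211224b} will then fall out by reading off signs from the resulting formula, so no computation beyond light bookkeeping is needed.

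First I would apply Corollary~\ref{rs:cor3} to the clockwise triangle $\mathcal T$ formed by $\varphi$, $\psi$, and $\theta:=\psi\circ\varphi$. That corollary yields $\rd(\mathcal T)=\rd(\psi_{mB})$, while by definition $\rd(\mathcal T)=(\rd(\varphi)+\rd(\psi))-\rd(\psi\circ\varphi)$. Equating the two and rearranging gives the first equality $\rd(\psi\circ\varphi)=\rd(\varphi)+\rd(\psi)-\rd(\psi_{mB})$. For the second equality, I would then apply Corollary~\ref{rs:cor1} to the local homomorphism $\psi\colon B\to C$ with the ideal $I=mB$ of $B$; since $IC=mBC=mC$, the identity \eqref{eq211223a} in that corollary becomes $\rd(\psi_{mB})=\rd(\psi)-(\rd(\pi_{mB})-\rd(\pi_{mC}))$, equivalently $\rd(\psi)-\rd(\psi_{mB})=\rd(\pi_{mB})-\rd(\pi_{mC})$. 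Substituting this into the first equality produces the second.

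The inequalities in \eqref{eq211224b} are then immediate from the formula $\rd(\psi\circ\varphi)=\rd(\varphi)+\rd(\psi)-\rd(\psi_{mB})$: the upper bound reduces to $\rd(\psi_{mB})\geq 0$, which is trivial since $\rd$ is by definition a nullity, and the lower bound reduces to $\rd(\psi_{mB})\leq \rd(\psi)$, which is precisely the inequality in~\eqref{eq211223a} from Corollary~\ref{rs:cor1} applied to $\psi$ and $mB$. There is no real obstacle here; the only point requiring mild care is the notational convention that $mC$ means $\theta(m)C=(mB)C$, so that the identification $(mB)C=mC$ and the symbol $\pi_{mC}$ are unambiguous.
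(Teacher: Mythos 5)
Your proposal is correct and follows essentially the same route as the paper: Corollary~\ref{rs:cor3} applied to the triangle gives the first equality, Corollary~\ref{rs:cor1} with $I=mB$ (noting $(mB)C=mC$) gives the second, and the inequalities come from $\rd(\psi_{mB})\geq 0$ together with $\rd(\psi_{mB})\leq\rd(\psi)$, which is exactly the content of~\eqref{eq211223a} (the paper cites Corollary~\ref{br:cor3}(1) for the latter, but that is the same fact underlying~\eqref{eq211223a}).
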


\begin{proof} Applying Corollary \ref{rs:cor3}, we get $$\rd(\varphi)+\rd(\psi)-\rd(\psi\circ \varphi)=\rd(\mathcal T)=\rd(\psi_{mB})$$ proving the first equality in~\eqref{eq211224a}. For the rest of~\eqref{eq211224a},
	by Corollary \ref{rs:cor1}, we have
	$$\rd(\psi_{mB})=\rd(\psi)+\rd(\pi_{mC})-\rd(\pi_{mB}).$$ 
	Next, the inequalities in~\eqref{eq211224b} follow from~\eqref{eq211224a} since $\rd(\psi)-\rd(\psi_{mB})\geq 0$
	by Corollary \ref{br:cor3}(1), and $\rd(\psi_{mB})\geq 0$ by definition.
	\end{proof}

\begin{corollary}\label{rs:cor5}
	Consider the commutative triangle of   local ring homomorphisms 
	\begin{equation}
	\begin{split}
\xymatrix{ A\ar@{->>}[r]^\varphi\ar[rd]_{\theta}&B\ar[d]^\psi\\
		&C}\end{split}\tag{$\mathcal T$}
	\end{equation}  such that $\varphi$ is surjective. Then $\mathcal T$ is basically regular and thus
$\rd(\psi\circ\varphi)=\rd(\varphi)+\rd(\psi).$
\end{corollary}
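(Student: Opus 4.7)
The plan is to reduce the statement to Corollary~\ref{rs:cor3}, which says that for any clockwise oriented triangle $\mathcal T$ as in the hypothesis, $\rd(\mathcal T)=\rd(\psi_{mB})$. So it suffices to establish that $\rd(\psi_{mB})=0$ whenever $\varphi\colon A\twoheadrightarrow B$ is a surjective local homomorphism.

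First, I would observe that surjectivity of $\varphi$ forces $mB=\varphi(m)B=\varphi(m)=n$, since $\varphi$ is local and surjective and $n$ is generated (even as a set) by the images of generators of $m$. Consequently $B/mB = B/n = L$ is a field. Then the induced map $\psi_{mB}\colon B/mB\to C/mC$ has source a field, so by the definition of basic regularity it is vacuously basically regular; equivalently, the source vector space $m_{B/mB}/m_{B/mB}^2\otimes\ldots$ of the defining linear map $\overline{\psi_{mB}}$ is zero, hence $\rd(\psi_{mB})=0$.

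Applying Corollary~\ref{rs:cor3} then gives $\rd(\mathcal T)=\rd(\psi_{mB})=0$, so $\mathcal T$ is basically regular. The additive formula $\rd(\psi\circ\varphi)=\rd(\varphi)+\rd(\psi)$ follows immediately by unwinding the definition of $\rd(\mathcal T)$ for the clockwise orientation, or equivalently from the first equality of Corollary~\ref{rs:cor4} with $\rd(\psi_{mB})=0$.

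There is no serious obstacle here; the only content is identifying that surjectivity collapses the closed fiber $B/mB$ to a field, at which point both cited corollaries do the work. The whole argument is essentially a two-line observation followed by a citation.
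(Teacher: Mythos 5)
Your argument is correct and follows essentially the same route as the paper: both reduce to the closed fibers via Theorem~\ref{rs:thm1}/Corollary~\ref{rs:cor3}, the paper phrasing the key point as ``$\varphi_m$ is an isomorphism'' while you phrase it as ``$mB=n$, so $B/mB$ is a field and hence $\rd(\psi_{mB})=0$,'' which is the same observation. The additive formula then follows exactly as you say, via Corollary~\ref{rs:cor4}.
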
 

\begin{proof} As $\varphi$ is surjective, the map $\varphi_m$ is an isomorphism. So, the induced triangle $K\otimes_A\mathcal T$ is basically regular, and $\mathcal T$ is basically regular, by Theorem~\ref{rs:thm1}, as in the proof of Corollary~\ref{rs:cor3}. 
\end{proof}

Next, we reprove a previous result.

\begin{proof}[Alternate proof of Corollary \ref{br:cor4}] Consider the commutative triangle
\[\xymatrix{ A\ar[r]^{\pi_I}\ar[rd]_{\pi_{ J/I}}& A/I\ar[d]^{\pi_{J}}\\
		& A/J.}
	\]
Corollary \ref{br:cor2}(i) implies $\delta_A(I)=\rd(\pi_I)$, so Corollary \ref{rs:cor5}
	yields the desired result.
	\end{proof}

The next corollary discusses the basic regularity of compositions. In light of Theorem~\ref{br:th1}, it follows from Corollaries~\ref{rs:cor1}, \ref{rs:cor4}, and  \ref{rs:cor5}.

\begin{corollary}\label{rs:cor6}
	Let $\psi\colon B\to C$ be another local
	homomorphism.
	\begin{enumerate}[\rm(1)]
		\item The following conditions are equivalent:
	\begin{enumerate}[\rm i)]
		\item $\psi\circ\varphi$ is basically regular.
		\item $\varphi$ is basically regular and $\rd(\psi)=\rd(\psi_{mB})$.
		\item $\varphi$ is basically regular and $\rd(\pi_{mB})=\rd(\pi_{mC})$.
	\end{enumerate}
		\item  $\psi$ is basically regular if and only if $\psi_{mB}$ 
		is basically regular and $\rd(\psi\circ\varphi)=\rd(\varphi)$.
		\item \label{item3} $\varphi$ and $\psi$ are basically regular $\Rightarrow$ $\psi\circ\varphi$ is basically regular.
	\item If $\varphi$ is surjective, then the converse to part~\eqref{item3} holds.		
	\end{enumerate}
\end{corollary}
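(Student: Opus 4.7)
The plan is to leverage the two identities in Corollary~\ref{rs:cor4}, namely
\[\rd(\psi\circ\varphi)=\rd(\varphi)+\rd(\psi)-\rd(\psi_{mB})=\rd(\varphi)+(\rd(\pi_{mB})-\rd(\pi_{mC})),\]
combined with the key nonnegativity facts $\rd(\varphi)\geq 0$, $\rd(\psi)-\rd(\psi_{mB})\geq 0$ (from Corollary~\ref{rs:cor4} itself, since $\rd(\psi_{mB})\leq\rd(\psi)$ by Corollary~\ref{rs:cor1}), and $\rd(\pi_{mB})-\rd(\pi_{mC})\geq 0$ (from Corollary~\ref{br:cor3}(1)), plus Theorem~\ref{br:th1}'s characterization of basic regularity as the vanishing of $\rd$. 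Everything will reduce to manipulating sums of nonnegative integers that vanish individually when their sum vanishes.

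For part~(1), I would start by rewriting basic regularity of $\psi\circ\varphi$ as $\rd(\psi\circ\varphi)=0$, and then apply the first identity above. Since it expresses $\rd(\psi\circ\varphi)$ as a sum of two nonnegative quantities $\rd(\varphi)$ and $\rd(\psi)-\rd(\psi_{mB})$, vanishing is equivalent to the simultaneous vanishing of both, giving the equivalence of (i) and (ii). The equivalence of (ii) and (iii) is then immediate from the equality $\rd(\psi)-\rd(\psi_{mB})=\rd(\pi_{mB})-\rd(\pi_{mC})$ implicit in the two forms of Corollary~\ref{rs:cor4}'s identity. For part~(2), I would observe that if $\psi$ is basically regular then $\rd(\psi)=0$ forces $\rd(\psi_{mB})=0$ (since $0\leq \rd(\psi_{mB})\leq\rd(\psi)$) and then the first identity yields $\rd(\psi\circ\varphi)=\rd(\varphi)$; conversely, $\rd(\psi\circ\varphi)=\rd(\varphi)$ combined with $\rd(\psi_{mB})=0$ forces $\rd(\psi)=0$ via the same identity.

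Part~(3) is a direct consequence of the upper bound $\rd(\psi\circ\varphi)\leq\rd(\varphi)+\rd(\psi)$ from Corollary~\ref{rs:cor4}: if both summands vanish, so does $\rd(\psi\circ\varphi)$. For part~(4), when $\varphi$ is surjective, Corollary~\ref{rs:cor5} upgrades the inequality to the equality $\rd(\psi\circ\varphi)=\rd(\varphi)+\rd(\psi)$, so vanishing of the left side forces vanishing of both nonnegative summands on the right, recovering basic regularity of $\varphi$ and $\psi$.

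There is no serious obstacle here; the work is purely bookkeeping on nonnegative integers, with all the substantive content already packaged into the identities and inequalities of Corollaries~\ref{rs:cor1}, \ref{rs:cor4}, \ref{rs:cor5}, and~\ref{br:cor3}. The only mild care needed is in part~(1) to make sure the two alternate reformulations of $\rd(\psi\circ\varphi)$ are invoked consistently so that (ii) and (iii) appear symmetrically as the two natural ways of expressing the single nonnegative quantity $\rd(\psi\circ\varphi)-\rd(\varphi)$.
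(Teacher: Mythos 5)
Your proposal is correct and follows exactly the route the paper intends: the paper gives no detailed argument, merely noting that the corollary follows from Theorem~\ref{br:th1} together with Corollaries~\ref{rs:cor1}, \ref{rs:cor4}, and~\ref{rs:cor5}, and your bookkeeping with the identities $\rd(\psi\circ\varphi)=\rd(\varphi)+\rd(\psi)-\rd(\psi_{mB})=\rd(\varphi)+(\rd(\pi_{mB})-\rd(\pi_{mC}))$ and the relevant nonnegativity facts is precisely the intended filling-in of that citation. No gaps.
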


Our next results  build basically regular maps that are not weakly regular from old maps.
The first such result follows readily from Corollaries~\ref{rs:cor1} and~\ref{rs:cor3}.

\begin{proposition}\label{rs:prop1}
	Consider   a proper ideal  $J\subset B$ such that $IB\subseteq J$ and the induced commutative triangle 
\begin{equation}
\begin{split}
\xymatrix{A/I\ar[r]^{\varphi_{I}}\ar[rd]_{\varphi_{I,J}}&B/IB\ar[d]^{\pi_{J/IB}}\\
		&B/J}\end{split}
\tag{$\mathcal{T}$}
\end{equation}
where $\varphi_{I,J}$ is the natural induced local map.
Then $$\rd(\mathcal T)=\rd\left(\pi_{{(J+mB)}/{(mB)}}\right)$$ and thus \begin{align*}\rd(\varphi_{I,J})&=\rd(\varphi_I)+\rd(\pi_{ J/{IB}})-\rd\left(\pi_{{(J+mB)}/{(mB)}}\right)\\
	&=\rd(\varphi)- 
	(\rd(\pi_I)-\rd(\pi_{IB}))+\rd(\pi_{ J/{IB}})-\rd\left(\pi_{{(J+mB)}/{(mB)}}\right).\end{align*}
\end{proposition}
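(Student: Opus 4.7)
The plan is to regard the triangle $\mathcal T$ as a commutative triangle of local homomorphisms based at $(A/I, m/I)$ and to apply Corollary~\ref{rs:cor3} directly. Writing $\varphi_{I,J}=\pi_{J/IB}\circ\varphi_I$, that corollary yields
$$\rd(\mathcal T)=\rd\bigl((\pi_{J/IB})_{(m/I)(B/IB)}\bigr),$$
so the first claimed equality reduces to identifying this induced map.

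The identification is a bookkeeping computation. Since $(m/I)(B/IB)=mB/IB$, the source of the induced map is $(B/IB)/(mB/IB)\cong B/mB$, while its target is the quotient of $B/J$ by the image of $mB$, namely $B/(J+mB)$. The induced map is therefore the canonical surjection $\pi_{(J+mB)/mB}\colon B/mB\to B/(J+mB)$, yielding the first equality $\rd(\mathcal T)=\rd(\pi_{(J+mB)/mB})$.

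For the remaining equalities, I would unfold the definition of the regularity defect of $\mathcal T$ in the clockwise orientation,
$$\rd(\mathcal T)=\rd(\varphi_I)+\rd(\pi_{J/IB})-\rd(\varphi_{I,J}),$$
rearrange, and substitute the first equality to obtain
$$\rd(\varphi_{I,J})=\rd(\varphi_I)+\rd(\pi_{J/IB})-\rd(\pi_{(J+mB)/mB}).$$
The second displayed formula then follows by plugging in $\rd(\varphi_I)=\rd(\varphi)-(\rd(\pi_I)-\rd(\pi_{IB}))$, which is exactly the equality supplied by Corollary~\ref{rs:cor1}.

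The only real obstacle is the identification of $(\pi_{J/IB})_{mB/IB}$ with $\pi_{(J+mB)/mB}$: one must track how the two successive quotients collapse to recover the correct canonical surjection. Once that identification is secured, every other step is a purely formal manipulation of the additive definition of $\rd(\mathcal T)$ and the formula from Corollary~\ref{rs:cor1}.
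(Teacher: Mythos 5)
Your proposal is correct and follows exactly the route the paper intends: apply Corollary~\ref{rs:cor3} to the triangle based at $(A/I,m/I)$, identify the induced map $(\pi_{J/IB})_{mB/IB}$ with the canonical surjection $B/mB\to B/(J+mB)$, and then combine the clockwise definition of $\rd(\mathcal T)$ with the formula $\rd(\varphi_I)=\rd(\varphi)-(\rd(\pi_I)-\rd(\pi_{IB}))$ from Corollary~\ref{rs:cor1}. The identification of the closed-fibre map, which you flag as the only delicate point, is carried out correctly.
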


\begin{corollary}\label{rs:cor7}
	Assume that $I\subseteq m^2$, and let $J\subseteq n^2$ be an ideal of $B$ such that $IB\subseteq J$. Consider the induced local homomorphism $\varphi_{I,J}\colon  A/I\lr  B/J$. Then $\rd(\varphi_{I,J})=\rd(\varphi).$
\end{corollary}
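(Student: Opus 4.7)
The plan is to apply Proposition~\ref{rs:prop1} and show that each of the four correction terms in the displayed formula vanishes under our hypotheses. Specifically, Proposition~\ref{rs:prop1} gives
\[
\rd(\varphi_{I,J})=\rd(\varphi)-\bigl(\rd(\pi_I)-\rd(\pi_{IB})\bigr)+\rd(\pi_{J/IB})-\rd\bigl(\pi_{(J+mB)/mB}\bigr),
\]
so I will argue that all four of the terms $\rd(\pi_I)$, $\rd(\pi_{IB})$, $\rd(\pi_{J/IB})$, and $\rd(\pi_{(J+mB)/mB})$ are zero. The key tool throughout is Corollary~\ref{br:cor2}(ii): a canonical surjection $\pi_H\colon R\to R/H$ has regularity defect zero if and only if $H$ is contained in the square of the maximal ideal of $R$.

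First, $\rd(\pi_I)=0$ directly from the hypothesis $I\subseteq m^2$. Next, since $mB\subseteq n$ we have $m^2B\subseteq n^2$, and therefore $IB\subseteq m^2B\subseteq n^2$, so that $\rd(\pi_{IB})=0$. For $\rd(\pi_{J/IB})$, observe that this is the regularity defect of the canonical surjection from $B/IB$ to $B/J$, which we must check against the square of the maximal ideal $n/IB$ of $B/IB$; since $J\subseteq n^2$, the ideal $J/IB$ is contained in $(n^2+IB)/IB=(n/IB)^2$, which yields $\rd(\pi_{J/IB})=0$. Similarly, the map $\pi_{(J+mB)/mB}$ is the canonical surjection $B/mB\to B/(J+mB)$, and we need $(J+mB)/mB\subseteq (n/mB)^2=(n^2+mB)/mB$; this containment follows from $J\subseteq n^2$, giving $\rd(\pi_{(J+mB)/mB})=0$.

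Combining these four vanishing statements in the formula from Proposition~\ref{rs:prop1} yields $\rd(\varphi_{I,J})=\rd(\varphi)$, as required. The only mildly delicate point, and the one I would write out carefully, is the bookkeeping for the last two terms: one must correctly identify the maximal ideal of each quotient ring and verify that the containment $J\subseteq n^2$ survives the passage to $B/IB$ and to $B/mB$. Each such verification is essentially the elementary observation that if $H'\subseteq H$ are ideals of $R$ with $H\subseteq \m^2$, then $H/H'$ lies inside $(\m/H')^2$.
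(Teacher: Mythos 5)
Your proof is correct and follows essentially the same route as the paper: apply Proposition~\ref{rs:prop1} and use Corollary~\ref{br:cor2}(ii) to see that $\rd(\pi_I)$, $\rd(\pi_{IB})$, $\rd(\pi_{J/IB})$, and $\rd\bigl(\pi_{(J+mB)/mB}\bigr)$ all vanish, via exactly the containments $I\subseteq m^2$, $IB\subseteq n^2$, $J/IB\subseteq (n/IB)^2$, and $(J+mB)/mB\subseteq (n/mB)^2$. The only cosmetic difference is that you derive $IB\subseteq n^2$ through $IB\subseteq m^2B\subseteq n^2$, whereas it also follows immediately from $IB\subseteq J\subseteq n^2$; both are fine.
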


\begin{proof} Note that, by Corollary \ref{br:cor2}(ii), one has $\rd(\pi_I)=0=\rd(\pi_{IB})$ as $I\subseteq m^2$ and $IB\subseteq n^2$. Also, $ J/{(IB)}\subseteq  ( {n}/{(IB)} )^2= {(n^2+IB)}/{IB}$ and $ {(J+mB)}/{mB}\subseteq  ( n/{mB} )^2= {(n^2+mB)}/{mB}$. Hence, again by Corollary \ref{br:cor2}(ii), $\rd(\pi_{  J/{IB}})=0=\rd(\pi_{\frac {J+mB}{mB}})$. It follows, by Proposition \ref{rs:prop1}, that $\rd(\varphi_{IJ})=\rd(\varphi)$.	
\end{proof}

The next result follows from Corollary~\ref{rs:cor7} with $J=IB$ and $I=0$, respectively.

\begin{corollary}\label{rs:cor8} 
\begin{enumerate}[\rm (1)]	\item If $I\subseteq m^2$, then $\rd(\varphi_I)=\rd(\varphi)$.
	\item If $J\subseteq n^2$ is an ideal of $B$, then $\rd(\pi_J\circ\varphi)=\rd(\varphi)$.
\end{enumerate}
\end{corollary}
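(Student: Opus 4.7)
The plan is to apply Corollary~\ref{rs:cor7} in the two indicated special cases, and the main work is really just bookkeeping to verify the hypotheses of~\ref{rs:cor7} hold in each case.

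For part~(1), I take $J:=IB$ inside Corollary~\ref{rs:cor7}. The hypothesis $I\subseteq m^2$ is given by assumption, and the containment $IB\subseteq J$ is trivial. The only nontrivial check is that $J=IB\subseteq n^2$: since $\varphi(m)\subseteq n$, one has $mB\subseteq n$, whence $m^2B\subseteq n^2$, and the assumption $I\subseteq m^2$ gives $IB\subseteq m^2 B\subseteq n^2$. With these hypotheses verified, Corollary~\ref{rs:cor7} yields $\rd(\varphi_{I,IB})=\rd(\varphi)$; but by construction $\varphi_{I,IB}$ is precisely the induced map $A/I\to B/IB$, i.e., $\varphi_I$, which finishes part~(1).

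For part~(2), I take $I:=0$ in Corollary~\ref{rs:cor7}. Then $I=0\subseteq m^2$ is automatic, the containment $IB=0\subseteq J$ is automatic, and $J\subseteq n^2$ is the standing assumption. The induced map $\varphi_{0,J}\colon A/0\to B/J$ is exactly the composite $\pi_J\circ\varphi$, so Corollary~\ref{rs:cor7} gives $\rd(\pi_J\circ\varphi)=\rd(\varphi)$, as claimed.

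There is no real obstacle; the only micro-subtlety is part~(1)'s verification that $I\subseteq m^2$ forces $IB\subseteq n^2$ (using the local-homomorphism condition $\varphi(m)\subseteq n$), and the identification of $\varphi_{I,IB}$ with $\varphi_I$ and of $\varphi_{0,J}$ with $\pi_J\circ\varphi$, both of which are immediate from the definitions of these induced maps. Thus the entire corollary is an immediate application of Corollary~\ref{rs:cor7}.
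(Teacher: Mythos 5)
Your proof is correct and is exactly the paper's argument: the paper derives the corollary from Corollary~\ref{rs:cor7} by taking $J=IB$ for part~(1) and $I=0$ for part~(2). Your additional check that $I\subseteq m^2$ forces $IB\subseteq n^2$ (via $\varphi(m)\subseteq n$) is a correct and welcome verification of a hypothesis the paper leaves implicit.
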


\begin{corollary}\label{rs:cor9}
	Assume that $\varphi$ is basically regular. 
		\begin{enumerate}[\rm (1)]
	\item The induced local homomorphism $\varphi_{I,J}\colon  A/I\lr  B/J$ is basically regular for any ideal $I\subseteq m^2$ of $A$ and any ideal $J\subseteq n^2$ of $B$ such that $IB\subseteq J$.
	\item If $I\subseteq m^2$, then the induced local homomorphism $\varphi_I$ is basically regular.
	\item The composition $\pi_J\circ\varphi\colon A\lr  B/J$ is basically regular for any ideal $J\subseteq n^2$ of $B$.
	\end{enumerate}
\end{corollary}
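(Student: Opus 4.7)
The plan is to recognize that this corollary is essentially a translation of Corollary~\ref{rs:cor7} into the language of basic regularity via the numerical criterion of Theorem~\ref{br:th1}. Since $\varphi$ is basically regular, Theorem~\ref{br:th1} gives $\rd(\varphi)=0$. For part~(1), given $I\subseteq m^2$ and $J\subseteq n^2$ with $IB\subseteq J$, Corollary~\ref{rs:cor7} yields $\rd(\varphi_{I,J})=\rd(\varphi)=0$, and Theorem~\ref{br:th1} then gives that $\varphi_{I,J}$ is basically regular.

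Parts~(2) and~(3) will then be obtained as special cases of~(1). For~(2), specialize to $J:=IB$; the hypothesis $IB\subseteq J$ is automatic, and $\varphi_{I,IB}=\varphi_I$. The only point to check is $IB\subseteq n^2$, which follows from $I\subseteq m^2$ together with the local homomorphism condition $\varphi(m)\subseteq n$: one has $IB\subseteq m^2B=(mB)^2\subseteq n^2$. For~(3), specialize~(1) to $I=0$, so that $I\subseteq m^2$ and $IB=0\subseteq J$ hold trivially for any $J\subseteq n^2$, and $\varphi_{0,J}$ is by definition $\pi_J\circ\varphi$.

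There is no real obstacle here: the whole corollary is a bookkeeping consequence of earlier results, with the one substantive point being the containment $m^2B\subseteq n^2$ in part~(2), which uses nothing beyond the definition of a local homomorphism. Alternatively, one could appeal directly to Corollary~\ref{rs:cor8}, which already records $\rd(\varphi_I)=\rd(\varphi)$ when $I\subseteq m^2$ and $\rd(\pi_J\circ\varphi)=\rd(\varphi)$ when $J\subseteq n^2$; combined with Theorem~\ref{br:th1} these give~(2) and~(3) on the nose, and~(1) then follows by factoring $\varphi_{I,J}=\pi_{J/IB}\circ\varphi_I$ and invoking Corollary~\ref{rs:cor6}\,(3) for the composition of two basically regular maps.
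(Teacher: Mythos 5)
Your argument is correct and is essentially the paper's proof: the paper deduces the corollary directly from Corollaries~\ref{rs:cor7} and~\ref{rs:cor8} together with the numerical criterion $\rd=0$ of Theorem~\ref{br:th1}, exactly as you do (including deriving parts~(2) and~(3) as the special cases $J=IB$ and $I=0$). Your observation that $IB\subseteq m^2B\subseteq n^2$ is the right small verification that the paper leaves implicit.
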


\begin{proof} It is straightforward by Corollaries \ref{rs:cor7} and  \ref{rs:cor8}.
\end{proof}

\section{Basic Regularity Versus Weak Regularity}\label{bwr}

We begin this section with various ways to construct basically regular homomorphisms which are not weakly regular.

\begin{proposition}\label{rs:cor10} Assume that $\varphi$ is basically regular.
		\begin{enumerate}[\rm (1)]
	\item Assume that $mB\neq n$ and $I\subseteq m^2$. Then the induced map $\varphi_{I,n^2}\colon  A/{I}\lr  B/{n^2}$ is basically regular while the closed fibre ring $\dfrac {B/n^2}{(m/I)B/n^2}$ is not regular.	
	\item Assume that $A$ is not artinian.
	Then $\pi_{n^2}\circ\varphi\colon A\lr  B/{n^2}$ is basically regular and not flat. If, moreover, $mB\neq n$, then the fibre ring $\dfrac {B/n^2}{m(B/n^2)}$ is not regular.
		\end{enumerate}
\end{proposition}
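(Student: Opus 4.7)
The plan is to invoke Corollary~\ref{rs:cor9} for the basic regularity half of each assertion, and then identify the closed fibre ring explicitly so that its failure to be regular can be detected by a short Nakayama argument.

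For part (1), I would first check the hypotheses of Corollary~\ref{rs:cor9}(1): the containment $I\subseteq m^2$ is given, and $IB\subseteq m^2B\subseteq n^2$ holds because $\varphi(m)\subseteq n$. That corollary then yields at once that $\varphi_{I,n^2}$ is basically regular. Next I would compute the closed fibre: the image of $m/I$ under $\varphi_{I,n^2}$ generates the ideal $(mB+n^2)/n^2$ of $B/n^2$, so
\[
\frac{B/n^2}{(m/I)(B/n^2)}\cong \frac{B}{mB+n^2}.
\]
Its maximal ideal $\bar n := n/(mB+n^2)$ satisfies $\bar n^{\,2}=0$ because $n^2\subseteq mB+n^2$, so this local ring is regular if and only if it is a field, i.e., iff $n=mB+n^2$. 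Applying Nakayama's lemma to $n/mB$ as the maximal ideal of the local ring $B/mB$, the latter equality forces $n=mB$, contradicting $mB\neq n$. Hence the fibre is not regular.

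For part (2), I would apply Corollary~\ref{rs:cor9}(3) with $J=n^2$ to obtain basic regularity of $\pi_{n^2}\circ\varphi$. For non-flatness, observe that $B/n^2$ has Krull dimension zero since its maximal ideal $n/n^2$ is nilpotent; if flatness held, then the standard dimension formula for flat local homomorphisms would give $0=\dim(B/n^2)=\dim(A)+\dim(\text{closed fibre})$, which would force $\dim(A)=0$, contrary to the assumption that $A$ is not artinian. For the last assertion, the closed fibre of $\pi_{n^2}\circ\varphi$ is again $B/(mB+n^2)$, and the same Nakayama argument as in (1) shows it is not regular when $mB\neq n$.

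The only real subtlety is conceptual rather than computational: one must recognise that both closed fibres collapse to the ring $B/(mB+n^2)$, whose maximal ideal has square zero, so that non-regularity reduces to the non-vanishing of that maximal ideal. Once that observation is in hand, Corollary~\ref{rs:cor9} delivers all the basic regularity statements essentially for free and Nakayama's lemma takes care of the fibre analysis.
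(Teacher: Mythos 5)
Your proposal is correct and follows essentially the same route as the paper: Corollary~\ref{rs:cor9} gives the basic regularity claims, the closed fibre is identified with $B/(mB+n^2)$, non-regularity comes from this ring having dimension zero but nonzero maximal ideal (your Nakayama step is just a rephrasing of the paper's observation that $\edim(B/mB)>0$ when $mB\neq n$), and non-flatness in (2) follows from the same dimension-formula contradiction via \cite[Theorem 15.1]{matsumura:crt}.
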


\begin{proof} (1) By Corollary \ref{rs:cor9}(1), the map $\varphi_{I,n^{2}}$ is basically regular. Now, observe that $$\dfrac {B/n^2}{(m/I)B/n^2}\cong  \frac{B}{(mB+n^2)}\cong \dfrac {B/mB}{(n/mB)^2}$$ so $$\edim \left (\dfrac {B/n^2}{(m/I)B/n^2}\right )=\edim  ( B/{mB} )>0=\dim\left (\dfrac {B/n^2}{(m/I)B/n^2}\right )$$ since $ B/{mB}$ is not a field and $\dim ( B/{n^2} )=0$. It follows that  $\dfrac {B/n^2}{(m/I)B/n^2}$ is not  regular.
	 
	(2) By Corollary \ref{rs:cor9}(3), the map $\pi_{n^2}\circ\varphi$ is basically regular. Notice that $$\dim(A)+\dim\left (\dfrac {B/n^2}{m(B/n^2)}\right )>0=\dim ( B/{n^2} )$$ as $A$ is not artinian.  It follows, e.g., by \cite[Theorem 15.1]{matsumura:crt}, that $ B/{n^2}$ is not flat over $A$. If moreover $mB\neq n$, then $\dfrac {B/n^2}{m(B/n^2)}$ is not regular, by (1).
\end{proof}

\begin{corollary}\label{rs:cor11}
	If $A$ is not artinian, then the canonical surjection $\pi_{m^2}\colon A\lr  A/{m^2}$ is basically regular and not flat.
\end{corollary}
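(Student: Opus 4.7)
The plan is to recognize this corollary as the special case of Proposition~\ref{rs:cor10}(2) obtained by taking $\varphi=\id_A$, with $B=A$ and $n=m$. First I would observe that the identity $\id_A$ is trivially basically regular: the associated map $\overline{\id_A}\colon m/m^2\to m/m^2$ is the identity of $K$-vector spaces, so $\rd(\id_A)=0$. Under the identification $B=A$, $n=m$, the composition $\pi_{n^2}\circ\varphi$ of Proposition~\ref{rs:cor10}(2) is exactly $\pi_{m^2}\colon A\to A/m^2$, and the hypothesis that $A$ is not artinian is precisely the hypothesis of that part. The conclusion then transfers verbatim: $\pi_{m^2}$ is basically regular and not flat.

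For a more self-contained route one can argue each part separately. The basic regularity is immediate from Corollary~\ref{br:cor2}(ii) applied with $I=m^2$, since $m^2\subseteq m^2$. For the non-flatness I would argue by contradiction: a flat local homomorphism $\pi_{m^2}$ would satisfy the standard dimension formula from \cite[Theorem~15.1]{matsumura:crt},
\[
\dim(A/m^2)=\dim(A)+\dim\bigl((A/m^2)/m(A/m^2)\bigr).
\]
The closed fiber ring here is $(A/m^2)/(m/m^2)\cong A/m=K$, which has dimension $0$, while $\dim(A/m^2)=0$ because $m^2$ is $m$-primary. So the formula forces $\dim(A)=0$, contradicting the hypothesis that $A$ is not artinian.

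There is no substantive obstacle; everything needed has already been assembled. The only minor point to check is that, in citing Proposition~\ref{rs:cor10}(2), the underlying hypothesis ``$\varphi$ basically regular'' is satisfied by $\id_A$, which I verify at the outset.
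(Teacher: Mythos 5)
Your proposal is correct and matches the paper's route: the corollary is exactly Proposition~\ref{rs:cor10}(2) applied with $\varphi=\id_A$ (so $B=A$, $n=m$, and $\pi_{n^2}\circ\varphi=\pi_{m^2}$), and your verification that $\id_A$ is basically regular, together with noting that the $mB\neq n$ clause is not needed for the stated conclusions, is all that is required.
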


For the next result, recall that a \emph{Cohen presentation} of a complete local ring $A$ is a surjection $\pi\colon (Q,r)\twoheadrightarrow  A$ such that $Q$ is regular and complete;
the presentation is \emph{minimal} if $\ker(\pi)\subseteq r^2$. Every complete local ring admits a minimal Cohen presentation by the Cohen structure theorem. 
\begin{corollary}\label{rs:cor11x}
	If $A$ is  complete and not regular, then any minimal Cohen presentation $\pi\colon Q\twoheadrightarrow  A$ is basically regular and not flat.
\end{corollary}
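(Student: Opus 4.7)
The plan is to treat the two conclusions separately, each via a short argument that leverages earlier results in the paper.

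For basic regularity, the setup is tailor-made for Corollary~\ref{br:cor2}(ii). A minimal Cohen presentation is, by definition, a surjection $\pi\colon (Q,r)\twoheadrightarrow A$ with $\Ker(\pi)\subseteq r^2$. Writing $I:=\Ker(\pi)$, the map $\pi$ is precisely $\pi_I$ in the notation of Section~\ref{br}, and the hypothesis $I\subseteq r^2$ is exactly condition~(3) in Corollary~\ref{br:cor2}(ii). That corollary then delivers basic regularity at once, with no additional work.

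For non-flatness I would argue by contradiction. Suppose $\pi$ were flat. Then $A$ would be a cyclic (hence finitely generated) flat $Q$-module; since $Q$ is noetherian and local, any finitely generated flat $Q$-module is free, so $A$ would be free of rank one over $Q$, which forces the surjection $\pi$ to be an isomorphism. But then $A\cong Q$ would be regular, contradicting the standing hypothesis. A homological variant giving the same contradiction: tensor $0\to I\to Q\to A\to 0$ with $A=Q/I$ over $Q$ and use $\Tor_1^Q(A,A)=0$ (from flatness of $A$) to get $I/I^2\cong I\otimes_Q A=0$, whence $I=0$ by Nakayama.

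There is really no hard step here: the whole corollary reduces to one application of Corollary~\ref{br:cor2}(ii) plus the classical fact that a flat surjective local homomorphism of noetherian local rings is an isomorphism. The non-regularity hypothesis on $A$ is precisely what blocks $\pi$ from being an isomorphism, which is the only place that hypothesis enters.
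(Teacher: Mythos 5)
Your proof is correct. The first half is exactly the paper's argument: minimality means $\Ker(\pi)\subseteq r^2$, so Corollary~\ref{br:cor2}(ii) gives basic regularity immediately. For non-flatness you take a slightly different route. The paper uses the specific structure of $Q$: since $A$ is not regular, $\pi$ is not an isomorphism, so $\Ker(\pi)$ contains a $Q$-regular element (because the regular local ring $Q$ is a domain), and a nonzerodivisor of $Q$ annihilating the nonzero $Q$-module $A$ is incompatible with flatness. You instead invoke the general principle that a flat surjection of noetherian local rings is an isomorphism, proved either via ``finitely generated flat over a noetherian local ring is free, and a cyclic free quotient forces $\Ker(\pi)=0$,'' or via the $\Tor$ computation $I/I^2\cong I\otimes_Q A=0$ plus Nakayama; then non-regularity of $A$ rules out the isomorphism. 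Both arguments are short and standard; yours has the mild advantage of not using that $Q$ is a domain (so it would apply verbatim to any non-bijective local surjection with flat target), whereas the paper's is arguably more self-contained in that it only needs the elementary fact that flat modules over a domain are torsion-free. Either way the non-regularity hypothesis enters at the same single point, namely to block $\pi$ from being an isomorphism.
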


\begin{proof}
Basic regularity of $\pi$ follows, e.g., from Corollary~\ref{br:cor2}(ii).
Since $A$ is not regular, the map $\pi$ is not an isomorphism, so $\ker(\pi)$ contains a $Q$-regular element since $Q$ is an integral domain, implying that $\pi$ is not flat. 
\end{proof}

Recall that a regular local ring $A$ with $\operatorname{char}(K)=p\geq 0$ is  \emph{unramified} if $p=0$ in $A$ or $p\not\in m^2$. The following result records the fact that unramified regular local rings are characterized by the basic regularity of their local structure homomorphism. 

\begin{proposition}\label{rs:prop2}
	Assume that $A$ is regular with  $\operatorname{char}(K)=p\geq 0$, and let $\eta_A\colon \mathbb{Z}_{(p)}\lr A$ be the local structure homomorphism. Then
		\begin{enumerate}[\rm (1)] 
	\item $A$ is unramified if and only if $\eta_A$ is basically regular.
	\item Assume that $A$ is unramified and that $p>0$. Then $\pi_{m^2}\circ\eta_A\colon \mathbb{Z}_{(p)}\lr  A/{m^2}$ is basically regular but is not flat and thus not weakly regular. 
	\end{enumerate}
\end{proposition}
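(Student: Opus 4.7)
The plan is to reduce part~(1) to the embedding-dimension characterization of basic regularity in Theorem~\ref{br:th1}, and to obtain part~(2) as a direct consequence of part~(1) together with Proposition~\ref{rs:cor10}(2).

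For part~(1), I would first dispatch the case $\operatorname{char}(K)=0$ on its own: here $\mathbb{Z}_{(0)}=\mathbb{Q}$ is a field, so $\eta_A$ is vacuously basically regular by the definition, while $A$ is trivially unramified. In the remaining case $\operatorname{char}(K)=p>0$, the source $\mathbb{Z}_{(p)}$ is a DVR with $\edim(\mathbb{Z}_{(p)})=1$, so Theorem~\ref{br:th1}(4) converts basic regularity of $\eta_A$ into the numerical identity $\edim(A)=1+\edim(A/pA)$. Rewriting this via Lemma~\ref{br:lem1.1} applied at $I=(p)\subset A$, it becomes $\delta_A((p))=\dim_K(((p)+m^2)/m^2)=1$, which holds precisely when $\eta_A(p)\notin m^2$. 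That is exactly the unramified condition in the mixed-characteristic setting, completing the equivalence.

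For part~(2), assume $A$ is unramified with $p>0$. Part~(1) then provides that $\eta_A$ is basically regular, and since $p>0$ the ring $\mathbb{Z}_{(p)}$ is a DVR, hence not artinian. The hypotheses of Proposition~\ref{rs:cor10}(2), applied with $\varphi=\eta_A$ (taking its source $\mathbb{Z}_{(p)}$ as the ring denoted $A$ in that statement and its target $A$ as the ring denoted $B$), are therefore satisfied. The conclusion there is that $\pi_{m^2}\circ\eta_A$ is basically regular and not flat, and since every weakly regular local homomorphism is flat, this composition also fails to be weakly regular.

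The main obstacle is really just the bookkeeping in part~(1): one must carefully translate the numerical criterion $\delta_A((p))=1$ into the unramified condition $\eta_A(p)\notin m^2$. This is an immediate one-generator calculation in $m/m^2$, but it is the computational heart of the argument; all other ingredients are already in place via Theorem~\ref{br:th1}, Lemma~\ref{br:lem1.1}, and Proposition~\ref{rs:cor10}(2).
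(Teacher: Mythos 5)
Your argument for part (2) is exactly the paper's: invoke Proposition~\ref{rs:cor10}(2) with $\varphi=\eta_A$, noting that $\mathbb{Z}_{(p)}$ is a DVR (hence not artinian) and that $\eta_A$ is basically regular by part (1), so $\pi_{m^2}\circ\eta_A$ is basically regular, not flat, and therefore not weakly regular. For part (1) the paper offers no detail beyond ``follows from the definitions,'' and your numerical route is the natural fleshing-out: by Theorem~\ref{br:th1}(4) and Lemma~\ref{br:lem1.1}, $\eta_A$ is basically regular iff $\edim(A)=1+\edim(A/pA)$ iff $\delta_A(pA)=\dim_K\bigl((pA+m^2)/m^2\bigr)=1$ iff $\eta_A(p)\notin m^2$, and the residue characteristic zero case is vacuous since $\mathbb{Z}_{(0)}=\mathbb{Q}$ is a field. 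This is correct as far as it goes.

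The one genuine gap is the phrase ``that is exactly the unramified condition in the mixed-characteristic setting'': you never address the case $\operatorname{char}(K)=p>0$ with $p=0$ in $A$ (e.g.\ $A=\mathbb{F}_p[\![x]\!]$). Under the paper's stated definition, such an $A$ is unramified (the clause ``$p=0$ in $A$'' holds), yet your own computation shows $\eta_A$ is \emph{not} basically regular there: $\eta_A(p)=0\in m^2$, so $\delta_A(pA)=0$ and $\rd(\eta_A)=1+\edim(A/pA)-\edim(A)=1$, since $A/pA=A$. So the equivalence you are proving, namely ``basically regular iff $\eta_A(p)\notin m^2$,'' does not literally coincide with ``unramified'' in that case, and your proof silently assumes mixed characteristic. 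To close the argument you must either record explicitly that when $p>0$ the statement concerns $p\neq 0$ in $A$ (i.e.\ the mixed-characteristic situation, which is evidently the intended reading, as the same issue would otherwise also defeat part (2)), or treat the equicharacteristic-$p$ case separately; as written, the case analysis is incomplete, even though the imprecision originates in the statement and definition rather than in your computation.
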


\begin{proof} Part (1) follows from the definitions,
and part	(2) is by Proposition \ref{rs:cor10}.2).
\end{proof}

Next, in the main result of this section, we complement Corollary~\ref{cor211225a}. 
	
	\begin{thm}\label{bwr:thm1}
		The following conditions are equivalent:
		\begin{enumerate}[\rm (1)]
			\item $\varphi$ is basically regular and $B$ is regular;
			\item $A$ and $ B/{mB}$ are regular, and $\dim(B)=\dim(A)+\dim ( B/{mB} )$;
			\item $\varphi$ is weakly regular and $A$ is regular.
		\end{enumerate}
	\end{thm}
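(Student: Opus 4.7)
The plan is to prove the cycle by establishing the easy implication $(3)\Rightarrow(1)$ directly from material already in the paper, showing $(1)\Leftrightarrow(2)$ via embedding-dimension/dimension bookkeeping, and then attacking the hard implication $(1)\Rightarrow(3)$ with a Koszul-complex flatness argument. For $(3)\Rightarrow(1)$, Corollary~\ref{cor211225a} gives that weakly regular maps are basically regular, and the classical ascent recalled in the introduction gives that $B$ is regular whenever $A$ is and $\varphi$ is weakly regular.

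For $(1)\Leftrightarrow(2)$, I would combine Theorem~\ref{br:th1}(4) -- which characterizes basic regularity as $\edim(B)=\edim(A)+\edim(B/mB)$ -- with three standard inequalities: $\edim(A)\geq\dim(A)$, $\edim(B/mB)\geq\dim(B/mB)$, and $\dim(B)\leq\dim(A)+\dim(B/mB)$ (valid for any local homomorphism). Assuming (1), the chain
\begin{equation*}
\dim(A)+\dim(B/mB)\geq\dim(B)=\edim(B)=\edim(A)+\edim(B/mB)\geq\dim(A)+\dim(B/mB)
\end{equation*}
forces equality throughout, yielding (2). Conversely, under (2), Proposition~\ref{br:lem2} supplies $\edim(B)\leq\edim(A)+\edim(B/mB)$, and the chain
\begin{equation*}
\edim(A)+\edim(B/mB)=\dim(A)+\dim(B/mB)=\dim(B)\leq\edim(B)\leq\edim(A)+\edim(B/mB)
\end{equation*}
forces $\edim(B)=\dim(B)$ and $\rd(\varphi)=0$, which together with Theorem~\ref{br:th1} recovers (1).

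The main obstacle is $(1)\Rightarrow(3)$, which requires promoting embedding-dimensional equalities to genuine flatness. By $(1)\Leftrightarrow(2)$ we already have $A$, $B/mB$, and $B$ regular and $\dim(B)=\dim(A)+\dim(B/mB)$. I would then fix a regular system of parameters $x_1,\dots,x_d$ of $A$ with $d=\dim(A)=\edim(A)$, so that $\varphi(x_1),\dots,\varphi(x_d)$ generates $mB$. Since $B$ is Cohen--Macaulay (being regular), the identity $\dim(B/mB)=\dim(B)-d$ yields $\htt(mB)=d$; hence $\varphi(x_1),\dots,\varphi(x_d)$ is part of a system of parameters of $B$ and therefore a $B$-regular sequence. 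The Koszul complex $K_\bullet(x_1,\dots,x_d;A)$ is a finite free resolution of $K$ over $A$, and base-changing along $\varphi$ yields $K_\bullet(\varphi(x_1),\dots,\varphi(x_d);B)$, which is acyclic in positive degrees by the regular-sequence property. It follows that $\Tor_i^A(K,B)=0$ for all $i>0$, and the local criterion of flatness forces $\varphi$ to be flat. Combined with the regularity of $B/mB$ from (2) and of $A$, this delivers (3).
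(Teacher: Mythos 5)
Your proposal is correct, and the numerical part is essentially the paper's argument: the equivalence $(1)\Leftrightarrow(2)$ is the same bookkeeping, using $\rd(\varphi)\geq 0$, Proposition~\ref{br:lem2} (equivalently Theorem~\ref{br:th1}(4)), $\dim\leq\edim$, and the general inequality $\dim(B)\leq\dim(A)+\dim(B/mB)$ for local homomorphisms, forcing equality in a chain exactly as the paper does for $(2)\Rightarrow(1)$ and its reversal. The genuine difference is in the flatness step $(1)\Rightarrow(3)$: the paper, having reduced to $(2)$ and observed that $B$ is Cohen--Macaulay, simply cites \cite[Theorem 23.1]{matsumura:crt} (regular base, Cohen--Macaulay target, and the dimension equality imply flatness), whereas you re-derive this in the case at hand by taking a regular system of parameters $x_1,\dots,x_d$ of $A$, showing $\varphi(x_1),\dots,\varphi(x_d)$ is a $B$-regular sequence via $\htt(mB)=d$ in the Cohen--Macaulay ring $B$, computing $\Tor_i^A(K,B)$ with the Koszul complex $K_\bullet(x_1,\dots,x_d;A)$ (a free resolution of $K$ since $A$ is regular), and invoking the local criterion of flatness, whose hypotheses (B Noetherian local with $mB\subseteq n$) are satisfied. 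This is precisely the standard proof of the cited theorem specialized to a regular base, so your route buys self-containedness at the cost of reproving a textbook result; the paper's route is shorter by citation. The slight difference in the implication cycle (you prove $(3)\Rightarrow(1)$ via Corollary~\ref{cor211225a} and ascent of regularity, the paper proves $(3)\Rightarrow(2)$) is immaterial, as both yield a complete cycle.
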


	\begin{proof} (3) $\Rightarrow$ (2) This follows from the definition of ``weakly regular'' and  standard  results for flat local maps.
	
		(2) $\Rightarrow$ (1) The first step in the next sequence is by definition.
				\begin{align*}
		0&\leq\rd(\varphi)\\
		&=\edim(A)+\edim (B/{{m}B} )-\edim(B)\\
		&=\dim(A)+\dim (B/{{m}B} )-\edim(B)\\
		&\leq\dim(A)+\dim (B/{{m}B} )-\dim(B)\\
		&=0.
		\end{align*}
		The second step follows from Proposition \ref{br:lem2}.
		The third and fifth steps are by assumption (2), and the fourth step is the standard inequality $\dim(B)\leq\edim(B)$.
		It follows that $\rd(\varphi)=0$ and $\dim(B)=\edim(B)$, as desired.

		(1) $\Rightarrow$ (2) Essentially, run the preceding paragraph in reverse.
		
		(1) $\Rightarrow$ (3) Assuming that (1) holds, we also get that (2) holds by the previous paragraph. In particular, $B$ is Cohen-Macaulay, so we can invoke \cite[Theorem 23.1]{matsumura:crt} to conclude that $\varphi$ is flat. Hence $\varphi$ is weakly regular, by definition, as desired.
		\end{proof}


\begin{thebibliography}{10}

\bibitem{andre:hac}
M.~Andr{\'e}, \emph{Homologie des alg\`ebres commutatives}, Springer-Verlag,
  Berlin, 1974, Die Grundlehren der mathematischen Wissenschaften, Band 206.
  \MR{0352220 (50 \#4707)}

\bibitem{MR485836}
L.~L. Avramov, \emph{Homology of local flat extensions and complete
  intersection defects}, Math. Ann. \textbf{228} (1977), no.~1, 27--37.
  \MR{485836}

\bibitem{avramov:solh}
L.~L. Avramov, H.-B.\ Foxby, and B.\ Herzog, \emph{Structure of local
  homomorphisms}, J. Algebra \textbf{164} (1994), 124--145. \MR{95f:13029}

\bibitem{MR3771857}
S.~Bouchiba, J.~Conde-Lago, and J.~Majadas, \emph{Cohen-{M}acaulay,
  {G}orenstein, complete intersection and regular defect for the tensor product
  of algebras}, J. Pure Appl. Algebra \textbf{222} (2018), no.~8, 2257--2266.
  \MR{3771857}

\bibitem{MR3751691}
S.~Bouchiba and S.~Kabbaj, \emph{Embedding dimension and codimension of tensor
  products of algebras over a field}, Rings, polynomials, and modules,
  Springer, Cham, 2017, pp.~53--77. \MR{3751691}

\bibitem{BKS2} S. Bouchiba, S.~Kabbaj, and K.~Sather-Wagstaff,
Basic regularity of factorizations of local ring homomorphisms,
in preparation.

\bibitem{MR979760}
N.~Bourbaki, \emph{Commutative algebra. {C}hapters 1--7}, Elements of
  Mathematics (Berlin), Springer-Verlag, Berlin, 1989, Translated from the
  French, Reprint of the 1972 edition. \MR{979760}

\bibitem{MR2284892}
\bysame, \emph{\'{E}l\'{e}ments de math\'{e}matique. {A}lg\`ebre commutative.
  {C}hapitres 8 et 9}, Springer, Berlin, 2006, Reprint of the 1983 original.
  \MR{2284892}

\bibitem{bruns:cmr}
W.\ Bruns and J.\ Herzog, \emph{Cohen-{M}acaulay rings}, revised ed., Studies
  in Advanced Mathematics, vol.~39, University Press, Cambridge, 1998.
  \MR{1251956 (95h:13020)}

\bibitem{eisenbud:ca}
D.~Eisenbud, \emph{Commutative algebra}, Graduate Texts in Mathematics, vol.
  150, Springer-Verlag, New York, 1995, With a view toward algebraic geometry.
  \MR{1322960 (97a:13001)}

\bibitem{MR318138}
D.~Ferrand, \emph{Monomorphismes et morphismes absolument plats}, Bull. Soc.
  Math. France \textbf{100} (1972), 97--128. \MR{318138}

\bibitem{grothendieck:ega4-4}
A.\ Grothendieck, \emph{\'{E}l\'ements de g\'eom\'etrie alg\'ebrique. {IV}.
  \'{E}tude locale des sch\'emas et des morphismes de sch\'emas {IV}}, Inst.
  Hautes \'Etudes Sci. Publ. Math. (1967), no.~32, 361. \MR{0238860 (39 \#220)}

\bibitem{MR0345945}
I.~Kaplansky, \emph{Commutative rings}, revised ed., The University of Chicago
  Press, Chicago, Ill.-London, 1974. \MR{0345945}

\bibitem{matsumura:crt}
H.\ Matsumura, \emph{Commutative ring theory}, second ed., Studies in Advanced
  Mathematics, vol.~8, University Press, Cambridge, 1989. \MR{90i:13001}

\bibitem{nasseh:cfwfa}
S.~Nasseh and K.~Sather-Wagstaff, \emph{Cohen factorizations: weak
  functoriality and applications}, J. Pure Appl. Algebra \textbf{219} (2015),
  no.~3, 622--645. \MR{3279378}

\end{thebibliography}

\providecommand{\bysame}{\leavevmode\hbox to3em{\hrulefill}\thinspace}
\providecommand{\MR}{\relax\ifhmode\unskip\space\fi MR }
\providecommand{\MRhref}[2]{%
  \href{http://www.ams.org/mathscinet-getitem?mr=#1}{#2}
}
\providecommand{\href}[2]{#2}

\end{document}